\DeclareMathOperator*{\argmin}{arg\,min}
\newcommand{\ls}{\leqslant}
\newcommand{\gs}{\geqslant}
\newcommand{\R}{\mathbb R}
\newcommand{\Rn}{\mathbb R^n}
\newcommand{\eps}{\varepsilon}
\newcommand{\tr}{\operatorname{Tr}}
\newcommand{\per}{\operatorname{Per}}
\renewcommand{\div}{\operatorname{div}}
\newcommand{\scal}[2]{\left \langle #1 \, , \, #2 \right \rangle}
\newtheorem{thm}{Theorem}
\newtheorem{prop}{Proposition}
\newtheorem{lem}{Lemma}
\newtheorem{cor}{Corollary}
\theoremstyle{definition}
\newtheorem{defi}{Definition}
\theoremstyle{remark}
\newtheorem{rem}{Remark}
\author{Gwenael Mercier\footnote{RICAM, Austrian Academy of Sciences: \texttt{gwenael.mercier@ricam.oeaw.ac.at}}}
\title{Mean curvature flow with obstacles: a viscosity approach}
\date{}
\begin{document}
\maketitle
\begin{abstract}
 We introduce a level-set formulation for the mean curvature flow with obstacles and show existence and uniqueness of a viscosity solution. These results generalize a well known viscosity approach for the mean curvature flow without obstacle by Evans and Spruck and Chen, Giga and Goto in 1991. In addition, we show that this evolution is consistent with the variational scheme introduced by Almeida, Chambolle and Novaga (2012) and we study the long time behavior of our viscosity solutions.
 
\paragraph{Keywords:} Mean Curvature Flow, viscosity solutions, long time behavior
\paragraph{Classification:} 53C44
\end{abstract}

\section{Introduction}
In this article, we introduce the level set formulation for a generalized motion by mean curvature with obstacles. More precisely, let $M(t) = \partial E(t)$ be a family of $n-1$ submanifold in $\Rn$, we say that it evolves by mean curvature if for any $x \in M(t)$, the velocity of $M(t)$ at $x$ is given by
\begin{equation}v(x) = - H \nu(x) \label{eqgeom}\end{equation} where $H$ is the mean curvature of $M(t)$ at $x$ (nonnegative if $E(t)$ is a convex set with boundary) and $\nu$ is the normal vector to $M(t)$ pointing towards $E(t)^c$.

Motivated by recent works from Almeida, Chambolle and Novaga \cite{chambolle12} and Spadaro \cite{spadaro11} about a discrete scheme for the mean curvature flow with obstacles, we want to constrain \eqref{eqgeom} forcing
\begin{equation} \Omega^-(t) \subset E(t) \subset \Omega^+(t) \label{congeom}\end{equation} where $\Omega^\pm$ are two open sets (which can depend on the time variable).

Mean curvature flow has been widely studied in the 30 past years for physical and biological purposes. For instance, one can mention \cite{almeida09, almeida11} for a new model in biology (tissue repair) using this evolution. Concerning the mathematical study, one can in particular cite \cite{brakke78} for a first paper on this motion, \cite{ecker91} for a geometric study of \eqref{eqgeom} and \cite{evans91} and \cite{chen91} for a level-set formulation and the use of viscosity solutions. In the sequel we follow the last approach.

It is well known (see for example \cite{evans91}) that if $u : \Rn \to \R$ is a smooth function with a nonzero gradient at $x_0$, the mean curvature of the level set $\{u=u(x_0)\}$ at $x_0$ is given by $\div\left( \frac{Du}{|Du|} \right) (x_0).$ As a result, making this set (and every other level-set of $u$) evolve by mean curvature leads to the following equation for $u$:
\begin{equation}
u_t = |Du| \div \left( \frac{Du}{|Du|} \right).
\label{mcflow}
\end{equation}
In the whole paper, we will think of $M(t)$ as the zero-level-set of $u(\cdot,t)$.\\
To add the constraint to \eqref{mcflow}, we define $u^\pm(x,t)$ such that
$$\Omega^-(t) \subset E(t) \subset \Omega^+(t) \Leftrightarrow \{u^+ < 0\} \subset \{u < 0\} \subset \{u^- < 0\}$$
and impose
\begin{equation} \forall x,t, \quad u^-(x,t) \ls u(x,t) \ls u^+(x,t).\label{viscon}\end{equation}

As in \cite{evans91}, \cite{chen91}, we study \eqref{mcflow} with constraint \eqref{viscon} using viscosity solutions. We first present a suitable viscosity framework and prove a uniqueness and existence result for bounded uniformly continuous initial data and obstacles and Lipschitz forcing term in the spirit of \cite{c92user}. Then, we link the regularity of the solution to the regularity of the initial data. 

We also show that our level-set approach really defines a geometric flow: the $\alpha$-level set of the solution depends only on the $\alpha$-level set of the initial data and the obstacles. Nonetheless, as expected, there is no real geometrical uniqueness: level sets of the solution can develop non empty interiors because of the obstacles (even if the free evolution does not). In an upcoming paper with Matteo Novaga \cite{mernov}, we study the MCF with obstacles with a geometrical point of view (in the spirit of \cite{ecker91}), proving short time existence, uniqueness and regularity of solutions.

Finally, in Section \ref{longtime}, we compare the approach followed by \cite{spadaro11} and \cite{chambolle12} (discrete minimizing scheme based on \cite{ATW}) to ours. More precisely, we show that the discrete scheme has a limit which is the viscosity solution to \eqref{mcflow} with constraint \eqref{viscon}. In addition, this variational approach gives monotonicity of the flow and therefore information on the long time behavior of the viscosity solution.

\section{Notation}
In what follows, we consider the equation (slightly more general than \eqref{mcflow}, but the latter has to be kept in mind), for $u : \Rn \times \R^+ \to \R$
\begin{equation}
 \forall t \gs 0, x\in \mathbb R^n, \quad u_t +F(Du,D^2u)+k|Du| =0, \label{motion}
\end{equation}
where $k : \Rn \times \R ^+ \to \R$ is a forcing term and $F : \Rn \times \mathcal S_n \to \R$ ($\mathcal S_n$ is the set of symmetric matrices of dimension $n$) satisfies
\begin{enumerate}[i)]
 \item $F \in \mathcal C\left( \R^n \setminus \{0\} \times \mathcal S_n(\R) \right)$ ,
\item $F$ is geometric : $\forall \lambda >0, \sigma \in \R, \; F(\lambda p,\lambda X+\sigma p \otimes p)=\lambda F(p,X)$,
\item For $X$ and $Y$ symmetric matrices with $X\ls Y$, $F(p,X) \ls F(p,Y)$.
\end{enumerate}

In the following, $Du$ and $D^2 u$ denote space derivatives only.\\
We will denote by $u \wedge v$ and $u \vee v$ the quantities $\min(u;v)$ and $\max(u;v)$.

We also introduce $F^\ast $ and $F_\ast $ which are respectively the upper semicontinuous and lower semicontinuous envelopes of $u$\footnote{This quantity is useful to make the following results apply for the mean curvature motion, where $$F(p,X)=-\tr\left( \left( I-\frac{p\otimes p}{|p|^2}  \right)X\right).$$} (see Definition \ref{enveloppes}). \\
To play the role of the obstacles, we consider $u^-$ and $u^+ : \Rn \times [0,+\infty) \to \R$, with $u^-\ls u^+$. The function $u$ will be forced to stay between $u^-$ and $u^+$. Geometrically, the constraint reads $$\{u^+ <s\} \subset \{u <s\} \subset \{u^- <s\},$$
where, given two functions $u,v : \R^n \times [0,\infty) \mapsto \R$, we will denote by 
$$\{u = v\} := \{(x,t) \in \R^n \times [0,\infty) \ \vert \ u(x,t) = v(x,t) \} \quad \text{ and}$$
$$\{u < v \} := \{(x,t) \in \R^n \times [0,\infty) \ \vert \ u(x,t) < v(x,t) \}.$$

To adapt the classical theory of viscosity solutions (we will use the same scheme of proof as in \cite{c92user}), the key point is to define correctly sub and super solutions of
\begin{equation}
 u_t + F(Du, D^2u) + k |Du| = 0 \quad \text{with} \quad u^- \ls u \ls u^+.
\label{pb}
\end{equation}
This definition for two obstacles has been already given, for instance in \cite{yam87}. To state it, we fisrt need the following notation.
\begin{defi}
 For $f : \Rn \to \R$, we denote by $f^\ast$ the upper semicontinuous envelope of $f$. More precisely
$$f^\ast(x) = \limsup_{y\to x} f(y).$$
We define in a similar way the lower semicontinuous envelope of $f$.
$$f_\ast (x) = \liminf_{y \to x} f(y).$$
Note that $f^\ast$ (resp. $f_\ast$) is the smallest (resp. largest) semicontinuous function $g$ such that $g \gs f$ (resp. $g \ls f$).
\label{enveloppes} 
\end{defi}

We are now ready to give the main definition.
\begin{defi}
\label{defsol}
 A function $u : \Rn \times \R^+ \to \R$ is said to be a (viscosity) subsolution on $[0,T)$ of the motion equation with obstacles $u^+,u^-$ and initial condition $u_0$ if 
\begin{itemize}
 \item $u$ is upper semicontinous (usc),
 \item for all $(x,t) \in \Rn \times [0,T), \, u^-(x,t) \ls u(x,t) \ls u^+(x,t)$,
\item for all $x \in \Rn$, $u(x,0)\ls u_0(x)$,
 \item if $\varphi$ is a $\mathcal C^2$ function of $x,t$, if $(\hat x,\hat t) \in \Rn \times (0,T)$ is a maximizer of $u-\varphi$ and if $u(\hat x,\hat t) > u^-(\hat x, \hat t)$, then, at $(\hat x, \hat t)$,
$$\varphi_t + F_\ast (D\varphi,D^2 \varphi) + k|D\varphi| \ls 0.$$ 
\end{itemize}
Similarly, $u$ is said to be a (viscosity) supersolution of the motion equation with obstacles $u^+,u^-$ and initial condition $u_0$ if 
\begin{itemize}
 \item $u$ is lower semicontinous (lsc),
 \item for all $(x,t) \in \Rn \times [0,T), \, u^-(x,t) \ls u(x,t) \ls u^+(x,t)$,
\item for all $x \in \Rn$, $u(x,0)\gs u_0(x)$,
 \item if $\varphi$ is a $\mathcal C^2$ function of $x,t$, if $(\hat x,\hat t) \in \Rn \times (0,T)$ is a minimizer of $u-\varphi$ and if $u(\hat x,\hat t) < u^+(\hat x, \hat t)$, then at $(\hat x, \hat t)$,
$$\varphi_t + F^\ast (D\varphi,D^2 \varphi)+ k|D\varphi| \gs 0.$$
\end{itemize}
Finally, $u$ is said to be a (viscosity) solution of the motion equation with obstacles $u^+,u^-$ if $u$ is both a super and a sub solution. \\
To simplify, we write
\begin{equation} u_t+F(Du,D^2u)+ k|Du| = 0 \quad \text{on }\{u^- \ls u \ls u^+\} \label{vsol}.
\end{equation}
A supersolution (resp subsolution) of the motion equation with obstacles $u^+,u^-$ will be called a supersolution (resp. subsolution) of \eqref{vsol}.
\end{defi}
Looking at the very definition, one can make the
\begin{rem}
\label{remdef}
Let $u$ be a subsolution with obstacles $u^- \ls u^+$. Then, $u$ is a subsolution with obstacles $u^-$ and $v^+$ for every $v^+ \gs u^+.$ \\
The obstacle $u^-$ is a subsolution whereas $u^+$ is a supersolution.
\end{rem}

\begin{rem}
It has to be noticed that using this definition, obstacles can depend on the time variable. Moreover, the contact zone $\{u^+ = u^-\}$ can be nonempty.

We also want to point out that the obstacle problem can be defined using a modified $F$ (see \cite{c92user}, Example 1.7). For instance, let
\begin{equation} G(x,t,u,u_t,Du,D^2u) = \max \left( \min\left( u_t + F(Du, D^2u) , u - u^- \right), u-u^+ \right). \label{minmax} \end{equation}
One can easily show that the (usual) viscosity solutions of $G = 0$ coincide with our definition above (the only difference is the subsolutions of $G=0$ do not have to satisfy $u\gs u^-$, but must remain below $u^+$).
Nonetheless \eqref{minmax} cannot be written of the form
$$ u_t + \tilde G(x,t,u,Du,D^2 u ) = 0,$$
which is the usual form for parabolic equations, for which known results (see \cite{c92user,giga90,chen91}) could apply. Thus, despite of this convenient formulation, we have to check that the usual results still apply. That is why we decided to use the definition above with a standard function $F$ but with (explicit) obstacles.
\end{rem}

There is another equivalent definition of such solutions, which can be useful (see \cite{c92user}). 
\begin{defi}
Let $f : \Rn \times (0,T) \to \R$. We said that $(a,p,X) \in \R \times \Rn \times \mathcal S_n(\R)$ is a superjet for $f$ at $(x_0,t_0)$ and we denote $(a,p,X) \in \mathcal P^{2,+} f(x_0,t_0)$ if, for $(x,t) \to (x_0,t_0)$ in $\Rn \times (0,T)$,
$$ f(x,t)\ls f(x_0,t_0)+a(t-t_0)+\scal{p}{x-x_0} + \frac 12 \scal{X(x-x_0)}{x-x_0} + o(|t-t_0| + |x-x_0|^2).$$
We likewise say that $(a,p,X) \in \R \times \Rn \times \mathcal S_n(\R)$ is a subjet for $f$ at $(x_0,t_0)$ and we denote $(a,p,X) \in \mathcal P^{2,-} f(x_0,t_0)$ if, for $(x,t) \to (x_0,t_0),$
$$ f(x,t)\gs f(x_0,t_0)+a(t-t_0)+\scal{p}{x-x_0} + \frac 12 \scal{X(x-x_0)}{x-x_0} + o(|t-t_0| + |x-x_0|^2).$$
Then, $u$ is a subsolution of \eqref{vsol} if it satisfies the three first assumptions of the previous definition and if
$$\forall (x,t) \in \Rn \times (0,T), \ (a,p,X) \in \mathcal P^{2,+} u(x,t), \quad u(x,t) > u^-(x,t) \Rightarrow a+F_\ast (p,X) + k|p| \ls 0.$$
Of course, $u$ is a supersolution of \eqref{vsol} if the three assumptions of the first definition are satisfied and if
$$\forall (x,t) \in \Rn \times (0,T), \ (a,p,X) \in \mathcal P^{2,-} u(x,t), \quad u(x,t) < u^+(x,t) \Rightarrow a+F^\ast (p,X) + k|p|  \gs 0.$$
\end{defi}

\section{Existence and uniqueness}
The aim of this section is to show the
\begin{thm}
We assume that $u^-$ and $u^+$ are uniformly continuous and bounded and that $k$ is Lipschitz. Then, if $u_0 : \Rn \to \R$ is uniformly continuous and $u^-(x,0) \ls u_0(x) \ls u^+(x,0)$, \eqref{vsol} has an unique solution, which is uniformly continuous.
\end{thm}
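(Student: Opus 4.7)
The plan is to follow the Crandall--Ishii--Lions template of \cite{c92user}: first prove a comparison principle between sub- and super-solutions, then construct a solution by Perron's method, and finally deduce uniform continuity by comparing the solution with its space-time translates.

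\textbf{Comparison.} Let $u$ be a subsolution and $v$ a supersolution with $u(\cdot,0)\ls v(\cdot,0)$. Assume for contradiction that $\sup(u-v) > 0$. After the usual reductions (adding a $\eta t$ term and a small quadratic $\gamma(|x|^2+|y|^2)$ to localize, possible because $u^\pm$ bounded forces $u,v$ bounded), one doubles variables with
$$ \Phi(x,y,t,s) = u(x,t) - v(y,s) - \frac{|x-y|^2+(t-s)^2}{2\eps} - \eta t - \gamma(|x|^2+|y|^2), $$
and picks a maximizer $(\hat x, \hat y, \hat t, \hat s)$. Standard estimates give $|\hat x-\hat y|^2/\eps, (\hat t-\hat s)^2/\eps \to 0$ and $\Phi \gs \delta > 0$ at the maximum for appropriate parameters. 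The crucial point is that this positivity, combined with $v(\hat y,\hat s)\gs u^-(\hat y,\hat s)$ and $u(\hat x,\hat t)\ls u^+(\hat x,\hat t)$, together with the uniform continuity of $u^\pm$, yields
$$ u(\hat x,\hat t)-u^-(\hat x,\hat t) \gs \delta - \omega(|\hat x-\hat y|+|\hat t-\hat s|) > 0 $$
for $\eps$ small (where $\omega$ is a joint modulus), and symmetrically $v(\hat y,\hat s) < u^+(\hat y,\hat s)$. Thus the obstacle conditions in Definition \ref{defsol} are \emph{inactive} at these points, the PDE inequalities for $u$ and $v$ really hold, and one closes by the parabolic Crandall--Ishii lemma, handling the Lipschitz forcing term $k|Du|$ via the bound $|\hat x-\hat y|/\eps$ that it produces.

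\textbf{Existence.} By Remark \ref{remdef}, $u^-$ is a subsolution and $u^+$ a supersolution. Using $u^-(\cdot,0)\ls u_0\ls u^+(\cdot,0)$, one builds barriers attaining the initial datum at $t=0$, for instance by comparing $u_0 \pm Ct$ (truncated to stay between the obstacles) with $u^\pm$ for $C$ large, which gives explicit sub- and supersolutions starting at $u_0$. Perron's method then produces a solution
$$ u(x,t) = \sup\{\, w(x,t) : w \text{ subsolution, } w(\cdot,0)\ls u_0, \ w\ls u^+\,\}. $$
The obstacle constraint passes to the limit since $\{u^-\ls \cdot\ls u^+\}$ is stable under pointwise supremum of usc functions and, dually, under the lsc envelope.

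\textbf{Uniform continuity and main obstacle.} Uniform continuity of $u$ in $(x,t)$ is obtained by comparing $u(x,t)$ with $u(x+h,t+\tau)+\omega_0(|h|,\tau)$ for a joint modulus $\omega_0$ computed from those of $u_0$, $u^\pm$ and $k$; the shifted function is a super-(resp.\ sub-)solution of a slightly perturbed problem whose obstacles still sandwich the shift, and comparison gives the modulus of continuity of $u$. The main technical difficulty is really the one isolated in the comparison step: since the PDE inequality in Definition \ref{defsol} is \emph{conditional} on being strictly away from the relevant obstacle, one must carefully verify that at the doubling-of-variables maximum we are in this regime, which is where the strict positivity of $\sup(u-v)$ and the uniform continuity of $u^\pm$ are both essential. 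Once this conditional activation is controlled, the remainder of the argument is a direct adaptation of \cite{c92user, evans91, chen91}.
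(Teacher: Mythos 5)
Your overall template (comparison, then Perron, then translation/regularity) is the one the paper uses, and you correctly isolate the new difficulty of checking that the obstacle constraints are inactive at the doubling maximum. However, there is a genuine gap in the comparison step: the quadratic penalization $\frac{|x-y|^2+(t-s)^2}{2\eps}$ does not work for the class of operators covered by \eqref{vsol}, because $F$ is only continuous for $p\neq 0$ (for the mean curvature operator $F(p,X)=-\tr\bigl(\bigl(I-\frac{p\otimes p}{|p|^2}\bigr)X\bigr)$ it is genuinely discontinuous at $p=0$). With your test function the gradient at the maximum is $\hat p=(\hat x-\hat y)/\eps$ and the Ishii matrix bound is only $\Vert X\Vert,\Vert Y\Vert\ls C/\eps$; in the regime $\hat x=\hat y$ (or $\hat p\to 0$) the quantity $F_\ast(\hat p,X)-F^\ast(\hat p,Y)$ cannot be controlled, since $F_\ast(0,X)-F^\ast(0,X)\gs 0$ with the wrong sign and $X\ls Y$ does not close the argument. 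This is precisely why the paper (following Evans--Spruck and Chen--Giga--Goto) penalizes with $\frac{\alpha}{4}|x-y|^4$: then $\hat p=\alpha|\hat x-\hat y|^2(\hat x-\hat y)$ and $\Vert X\Vert\ls C\alpha|\hat x-\hat y|^2$, so that in the limit $p=0$ forces $X_0=0$ and $F_\ast(0,0)=F^\ast(0,0)=0$, which is the final dichotomy in the proof of Proposition \ref{compple}. The companion lemma $\lim_{\alpha}\lim_{\eps}\alpha|\hat x-\hat y|^4=0$ is then what absorbs the term $L|\hat x-\hat y|\,|\hat p|$ coming from the Lipschitz forcing; your "bound $|\hat x-\hat y|/\eps$" does not transpose to this setting without it.

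Two further steps would fail as written. First, $u_0\pm Ct$ is not a sub/supersolution for a merely uniformly continuous (or even Lipschitz) $u_0$, since $F(Du_0,D^2u_0)$ admits no bound; the paper's barriers in Section \ref{barriernf} are built as $\sup_\xi\theta_\xi(-|x-\xi|^2-2nt)$ from the explicit classical subsolutions $-(|x-\xi|^2+2nt)$ using that $F$ is geometric, and the forcing term is then handled by first treating Lipschitz data (where $\psi\pm NKt$ is a legitimate barrier) and passing to the limit with Ascoli via Proposition \ref{regularsol}. Second, the translation argument for uniform continuity is only valid when $k=0$ (Remark \ref{noforcing}); when $k$ depends on $x$ the shifted function solves a different equation and the comparison principle as stated does not apply, which is why the paper proves the modulus $|u(x,t)-u(y,t)|\ls\omega(e^{Lt}|x-y|)$ by a separate doubling argument with the regularized moduli $\omega_n$ (Proposition \ref{regularsol}). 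You should also record, within Perron's method, the bump construction of Lemma \ref{notsol} showing that the lower envelope of the supremum is a supersolution away from the upper obstacle; your one-line invocation of Perron hides exactly the place where the constraint $u\ls u^+$ must be preserved by the bump.
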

The structure of the proof is classical when dealing with viscosity solutions. A comparison principle will show uniqueness, and existence will follow by standard methods. \\
In what follows, $L$ is a Lipschitz constant of $k$ and $\omega$ is a modulus of continuity for $u_0$, $u^-$ and $u^+$.

\subsection{Uniqueness}
We begin by proving a comparison principle, adapted from \cite{c92user}, Theorem 8.2. It has to be noticed that the same result with no obstacles has been proved in \cite{giga90} (Th. 4.1) in a very general framework. We could adapt this result to the obstacle case but we prefer to present a simpler and self consistent proof based on \cite{c92user} (nonetheless, we will use some ideas of \cite{giga90}).
 
\begin{prop}[Comparison principle]
We assume that $u$ is a subsolution and $v$ a supersolution of $\eqref{vsol}$ on $(0,T)$, and that $u(x,0)\ls v(x,0)$. Then, $u \ls v$ in $\Rn \times (0,T)$.
\label{compple}
\end{prop}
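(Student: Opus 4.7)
The plan is to argue by contradiction using the classical doubling-of-variables technique from the user's guide \cite{c92user}, adapted to accommodate the two obstacles and the singularity of $F$ at $p=0$. Suppose for contradiction that $M := \sup_{\Rn \times (0,T)}(u-v) > 0$. Since $u \leqslant u^+$ and $v \geqslant u^-$ with bounded obstacles, $u-v$ is bounded above, so one can penalize to force a maximum. I would consider the auxiliary function
\begin{equation*}
 \Phi_{\varepsilon,\sigma,\eta}(x,y,t,s) = u(x,t) - v(y,s) - \frac{|x-y|^{4}}{4\varepsilon^{4}} - \frac{(t-s)^2}{2\varepsilon^{2}} - \sigma(|x|^2+|y|^2) - \frac{\eta}{T-t},
\end{equation*}
where the $|x-y|^4$ penalty (rather than the more usual $|x-y|^2$) is chosen so that when the penalty gradient $p = (\hat x-\hat y)|\hat x-\hat y|^2/\varepsilon^4$ vanishes, the associated Ishii matrices vanish too, which is the standard trick of Evans--Spruck and Chen--Giga--Goto to cope with the singular behavior of $F$ at $p=0$. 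The $\sigma$-term makes the supremum attained (since $u,v$ are bounded via the obstacles), and the $\eta/(T-t)$ term prevents the maximum from drifting to $t=T$; the hypothesis $u(\cdot,0)\ls v(\cdot,0)$ rules out $\hat t=0$ for small $\eta$.

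At a maximizer $(\hat x,\hat y,\hat t,\hat s)$, the parabolic Crandall--Ishii lemma produces superjets $(a_1,p,X) \in \mathcal J^{2,+}u(\hat x,\hat t)$ and subjets $(a_2,p,Y) \in \mathcal J^{2,-}v(\hat y,\hat s)$ with $a_1-a_2 = \eta/(T-\hat t)^2 \gs \eta/T^2$ and with $X \ls Y + \text{(harmless correction from $\sigma$)}$ in the symmetric sense. The key obstacle-handling step is the following dichotomy: either
\begin{equation*}
 u(\hat x,\hat t) > u^-(\hat x,\hat t) \quad\text{and}\quad v(\hat y,\hat s) < u^+(\hat y,\hat s),
\end{equation*}
in which case both the sub- and supersolution inequalities are usable, or one of the two equalities holds. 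In the latter case, say $u(\hat x,\hat t) = u^-(\hat x,\hat t)$, then $u^-(\hat x,\hat t) = u(\hat x,\hat t) > v(\hat y,\hat s) \gs u^-(\hat y,\hat s)$, so by uniform continuity of $u^-$,
\begin{equation*}
 M \ls u(\hat x,\hat t) - v(\hat y,\hat s) \ls u^-(\hat x,\hat t) - u^-(\hat y,\hat s) \ls \omega(|\hat x-\hat y|+|\hat t-\hat s|),
\end{equation*}
which tends to $0$ as $\varepsilon \to 0$, contradicting $M>0$; the symmetric case with $u^+$ is identical. Thus one may assume both viscosity inequalities hold.

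Subtracting them yields
\begin{equation*}
 \frac{\eta}{T^2} + F_\ast(p,X) - F^\ast(p,Y) + \bigl(k(\hat x,\hat t)-k(\hat y,\hat s)\bigr)|p| \ls 0.
\end{equation*}
The Lipschitz bound on $k$ handles the last term by $\mathrm{Lip}(k)(|\hat x-\hat y|+|\hat t-\hat s|)|p|$, which vanishes as $\varepsilon \to 0$ because, with the quartic penalty, $|p|\cdot|\hat x-\hat y| = |\hat x-\hat y|^4/\varepsilon^4 \to 0$. The difference $F_\ast(p,X)-F^\ast(p,Y)$ is nonnegative by the monotonicity iii) whenever $p \neq 0$ (using $X \ls Y$ modulo $O(\sigma)$ and the geometric scaling), and when $p=0$ the quartic choice forces $X,Y\to 0$, so both terms tend to $0$ by definition of the semicontinuous envelopes. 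Sending successively $\varepsilon\to 0$, $\sigma\to 0$, and finally $\eta\to 0$ produces the contradiction $0 < \eta/T^2 \ls 0$. The main obstacle, as usual in the MCF literature, is precisely this singular case $p=0$: handling it forces the choice of the quartic penalty and care in estimating the Ishii matrices, while the novelty here is the (elementary but crucial) dichotomy argument that lets the obstacle constraint interact cleanly with the doubling-of-variables scheme.
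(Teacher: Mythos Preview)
Your proposal is correct and follows essentially the same route as the paper: contradiction via doubling of variables with a quartic space penalty, the obstacle dichotomy handled through the uniform continuity of $u^\pm$, and the $p=0$ versus $p\neq 0$ split at the end. The paper uses a single time variable and the parabolic Ishii lemma (and absorbs the $\eta/(T-t)$ term into a modified subsolution $\tilde u=(u-\eta/(T-t))\vee u^-$ rather than into $\Phi$), whereas you double time as well; this is a cosmetic difference. One point you glide over: the claim $|\hat x-\hat y|^4/\varepsilon^4\to 0$ (equivalently, that the penalty term vanishes in the limit) is not automatic and in the paper is isolated as a separate lemma in the spirit of \cite{giga90}; it is standard (cf.\ \cite{c92user}, Lemma~3.1) but worth flagging, since both the $k$-term estimate and the $p=0$ case rest on it.
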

\begin{proof}
We proceed by contradiction. Since for every $c$ sufficiently small, we can find $\eta > 0$ such that $\tilde u = (u-\frac{\eta}{T-t}) \vee u^-$ is still a subsolution, but with 
$$F(D\tilde u, D^2\tilde u) + k|D\tilde u| \ls -c < 0,$$ it is enough to prove the comparison principle with $\tilde u$ and then pass to the limit (nonetheless, we still write $u$). Suppose that there exists $\overline x,\overline t$ such that $u(\overline x,\overline t)-v(\overline x,\overline t) \gs 2 \delta >0.$ One defines
$$\Phi(x,y,t)=u(x,t)-v(y,t)-\frac{\alpha}{4} |x-y|^4 - \frac{\varepsilon}{2}(|x|^2 + |y|^2).$$
If $\varepsilon$ is sufficiently small, $\Phi(\overline x,\overline x,\overline t) \gs \delta.$ Hence, $M:=\max \limits_{x,y,t} \Phi(x,y,t) \gs \delta$ (the penalization at infinity $\frac 12 \eps(|x|^2 + |y|^2)$ reduces searching for the maximum to a compact set). Let $\hat x,\hat y, \hat t$ be a maximum point. Since $u$ and $v$ are bounded, there is $C$ depending only on $\Vert u \Vert_\infty$ and $\Vert v \Vert_\infty$ such that 
$$|\hat x - \hat y| \ls \frac{C}{\alpha^{1/4}}.$$

First, let us show by contradiction that $u(\hat x,\hat t) > u^-(\hat x, \hat t)$ and $v(\hat y,\hat t) < u^+(\hat y,\hat t)$. Suppose for example that $u(\hat x,\hat t) = u^-(\hat x, \hat t)$. Then
\begin{align*}0<\delta&\ls u^-(\hat x,\hat t)-v(\hat y,\hat t) \ls u^-(\hat y,\hat t) + \omega(|\hat x-\hat y|) -v(\hat y,\hat t) \\ &\ls \omega(|\hat x- \hat y|) + 0 \ls \omega(C\alpha^{-1/4}).
\end{align*}
Hence, if $\alpha$ is sufficiently large (independently of $\varepsilon$), $\omega(C \alpha^{-1/4}) \ls \delta /3$. Contradiction (this shows moreover that $\hat t < T$). Similarly, $v(\hat y,\hat t) < u^+(\hat y, \hat t)$.

In what follows, $\alpha$ is fixed sufficiently big  to satisfy these conclusions.

As \begin{equation}M + \alpha |x-y|^4 + \frac{\varepsilon}{2}(|x|^2+|y|^2) \gs u( x, t)-v( y,t) \label{lemIshii}\end{equation} with equality in $\hat x,\hat y, \hat t$, we are able to apply Ishii's lemma \cite{c92user} to 
$$u(x,t) - v(y,t) - \Phi(x,y,t) \quad \text{ where } \quad \Phi(x,y,t) = M + \alpha |x-y|^4 + \frac{\varepsilon}{2}(|x|^2+|y|^2)$$
which provides, for every $\mu >0$, $(a,b,X,Y)$ such that $(a,\underbrace{\alpha|\hat x - \hat y|^2(\hat x - \hat y)}_{=:\hat p}-\varepsilon \hat y,Y) \in \overline{\mathcal P}^{2,+}v(\hat y, \hat t)$ and $(b,\alpha|\hat x - \hat y|^2(\hat x - \hat y)+\varepsilon \hat x,X) \in \overline{\mathcal P}^{2,-}u(\hat x, \hat t)$. It provides moreover $a-b=0$ and 
$$-\left( \frac{1}{\mu}+ \Vert A \Vert \right) \begin{bmatrix} I & 0 \\ 0 & I \end{bmatrix} \ls \begin{bmatrix} X & 0 \\ 0 & -Y \end{bmatrix} \ls A + \mu A^2,$$
where 
$$A = D^2 \Phi (\hat x, \hat y,\hat t) = \begin{bmatrix} P & -P \\ -P & P \end{bmatrix} + \varepsilon \begin{bmatrix} I & 0 \\ 0 & I \end{bmatrix} $$
and
$$ P  = 2 \alpha (\hat x - \hat y) \otimes (\hat x - \hat y) + \alpha |\hat x - \hat y |^2.$$

That shows in particular that $X-Y \ls \eps^2 I$ and $\Vert X\Vert, \Vert Y \Vert \ls C_1 \left(\alpha |\hat x - \hat y|^2 + \varepsilon \right).$ 

Since $u$ and $v$ are respectively subsolution and supersolution near $(\hat x ,\hat t)$ and $(\hat y, \hat t)$, one has
$$
c \ls  a-b + F^\ast (\hat p-\varepsilon \hat y, Y-\varepsilon I)-F_\ast (\hat p+\varepsilon \hat x, X+\varepsilon I) + k(\hat y,\hat t)|\hat p-\varepsilon y|- k(\hat x, \hat t)|\hat p +\varepsilon \hat x |.
$$
One can write
$$k(\hat y, \hat t)|\hat p +\varepsilon \hat y | - k(\hat x,\hat t)|\hat p-\varepsilon x| \\ \ls (k(\hat y,\hat t) - k(\hat x,\hat t))|\hat p +\varepsilon \hat y | + 2|k(\hat x,\hat t)|(|\varepsilon \hat y| + |\varepsilon \hat x|),$$
which gives, with $a-b=0$,
$$
c \ls F^\ast (\hat p-\varepsilon y , Y-\varepsilon I)  - F_\ast (\hat p +\varepsilon \hat x,X+\varepsilon I)\\+ L(|\hat x - \hat y|)|\hat p +\varepsilon \hat y | + 2\Vert k \Vert_{\infty}(|\varepsilon \hat x| + |\varepsilon \hat y|).
$$
Then, we want to let $\varepsilon$ go to 0. \\
Since $M \gs \delta >0$, we have
$$\delta + \frac{1}{4}\alpha |\hat x-\hat y|^4 + \frac{\varepsilon}{2}(|\hat x|^2+|\hat y|^2) \ls u(\hat x, \hat t)-v(\hat y,\hat t) \ls \Vert u \Vert_\infty + \Vert v \Vert_\infty,$$
which implies that $\varepsilon |\hat x|^2$ is bounded, hence $\varepsilon \hat x \to 0$ (same for $\varepsilon \hat y$), whereas for $i \in \{2,3,4\},$ $\alpha |\hat x - \hat y|^i$ is bounded (so is $\hat p$, $X$ and $Y$). Indeed, $\alpha$ is fixed here. Hence one can assume that $\hat p \to p$, $X \to X_0$, $\alpha |\hat x - \hat y|^4 \to \mu_\alpha.$ \\
We now use a short lemma, which is an easy adaptation of \cite{giga90}, Proposition 4.4 (see also Lemma 2.8 in the preprint of \cite{for08}, which has a form which is closer to ours) and whose proof is reproduced here for convenience.
\begin{lem}
One has 
$$\lim_{\alpha \to \infty} \lim_{\varepsilon \to 0} \alpha |\hat x - \hat y|^4 =0.$$
\end{lem}
\begin{proof}
Let $$M_h=\sup \limits_{\substack{|x-y| \ls h\\t \in [0,T)}} (u(x,t)-v(y,t))$$ and $(x_h^n,y_h^n,t_h^n)$ such that $u(x_h^n,t_h^n) - v(y_h^n,t_h^n) \gs M_h - \frac 1n$ and $|x_h^n-y_h^n|\ls h.$ Then,
$$M_h - \frac 1n - \frac{\alpha h^4}{4} - \frac \varepsilon2\left(|x_h^n|^2 + |y_h^n|^2\right) \ls M \ls  u(\hat x, \hat t) - v(\hat y,\hat t).$$
As $x_h^n$ and $y_h^n$ do not depend on $\varepsilon$, one can let it go to zero (considering the liminf of the right term) to get
$$M_h-\frac 1n - \frac{\alpha h^4}{4} \ls \liminf_{\varepsilon \to 0} (u(\hat x,\hat t)-v(\hat y,\hat t)).$$
Let $h\to 0$ (We denote by $M'$ the decreasing limit of $M_h$). One obtains
$$M' - \frac 1n \ls \liminf_{\varepsilon \to 0} (u(\hat x,\hat t) - v(\hat y,\hat t)).$$
Let $\alpha$ go to infinity:
\begin{align*}
 M'-\frac 1n &\ls \liminf_{\alpha \to \infty} \liminf_{\varepsilon \to 0} ( u(\hat x,\hat t) - v(\hat y,\hat t)) \\
&\ls \limsup_{\alpha \to \infty} \left( \sup_{\substack{|x-y|\ls C\alpha^{-1/4} \\ t\in [0,T)}} (u(x,t)-v(y,t))\right) \\
&\ls \limsup_{h\to 0} \sup_{|x-y|\ls h} (u(x,t)-v(y,t)) = M'
\end{align*}
hence
$$\lim_{\alpha \to \infty} \lim_{\varepsilon \to 0} u(\hat x,\hat t) - v(\hat y, \hat t) = M'.$$
We prove similarly that $\lim \limits_{\alpha \to \infty} \lim \limits_{\varepsilon \to 0} M = M'$.
As a matter of fact,
$$\lim_{\alpha \to \infty} \lim_{\varepsilon \to 0} \left( \alpha |\hat x - \hat y|^4 + \frac{\varepsilon}{2} (|\hat x|^2 + |\hat y|^2) \right) = 0,$$
which proves the lemma.
\end{proof}
One can now choose $\alpha$ such that $\lim \limits_{\varepsilon \to 0} \alpha |\hat x - \hat y|^4 \to \mu_\alpha$ with $\mu_\alpha \ls c /2L$ and pass to the liminf in $\varepsilon \to 0$. One gets (using $X \ls Y + \varepsilon^2 I$),
$$\frac{c}{2} \ls \liminf \left( F_\ast (\hat p ,X) - F^\ast (\hat p , X) \right).$$
To conclude, we distinguish two cases:
\begin{itemize}
 \item if $p \neq 0$, then $F^\ast (p,X_0) = F_\ast (p,X_0)$ and we get the contradiction.
\item if $p =0$, we have $\alpha|\hat x - \hat y|^2(\hat x - \hat y) \underset{\varepsilon \to 0}\longrightarrow 0$, so $X_0=0$ and $F^\ast (p,X_0)=F_\ast (p,X_0)=0$ and we get the contradiction too.
\end{itemize}

\end{proof}

\subsection{Existence}
We will build a solution using Perron's method. Since we know that the supersolutions of \eqref{vsol} remain larger than subsolutions, the solution, if it exists, must be the largest subsolution (or equivalently, the smallest supersolution). Hence we introduce
$$W(x,t)=\sup\{w(x,t),\; \text{$w$ subsolution on $[0,T)$}\}.$$
We show that $W$ is in fact the expected solution to \eqref{vsol}.

Let us first state a straightforward but useful proposition.

\begin{prop}

\begin{enumerate}[i)] 
\item Let $u$ be a subsolution of the motion without obstacles which satisfies $u \ls u^+$. Then, $u_{ob}:=u\vee u^-$ is a subsolution of \eqref{vsol} with obstacles (the same happens for $v \gs u^-$ supersolution and $v_{ob} = v \wedge u^+$).
\item More generally, if $u$ is a solution of the motion with initial conditions $u_0$ and obstacles $(u^-,u^+)$ and if $v^-$ and $v^+$ are other obstacles which satisfy $u^-\ls v^- \ls u^+ \ls v^+$, then $u\vee v^-$ is a subsolution of the equation with initial condition $u_0 \vee v^- \vert_{t=0}$ and obstacles $v^-$ and $u^+$. In addition, $u$ is a subsolution of the equation with initial conditions $u_0$ and obstacles $u^-$, $v^+$.
\end{enumerate}
 \label{wedgevee}
\end{prop}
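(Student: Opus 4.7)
The plan is to verify each of the four defining items of a subsolution (upper semicontinuity, two-sided obstacle bound, initial condition, and viscosity inequality at a test maximum) in turn, for each of the statements; the supersolution case is symmetric and I would handle it by the same scheme with $\wedge$, $u^+$, and $\mathcal{J}^{2,-}$ in place of $\vee$, $u^-$, and $\mathcal{J}^{2,+}$.

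For item (i), set $u_{ob}=u\vee u^-$. Upper semicontinuity of $u_{ob}$ is immediate since $u$ is usc and $u^-$ is continuous. The sandwich $u^-\ls u_{ob}\ls u^+$ follows from the hypothesis $u\ls u^+$ together with $u^-\ls u^+$. For the initial data, $u_{ob}(x,0)=u(x,0)\vee u^-(x,0)\ls u_0(x)\vee u^-(x,0)$, which is the natural initial datum for the obstacle problem. The only real content is the viscosity inequality. Let $\varphi\in\mathcal C^2$ and let $(\hat x,\hat t)$ be a maximum of $u_{ob}-\varphi$ with $u_{ob}(\hat x,\hat t)>u^-(\hat x,\hat t)$. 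Then necessarily $u_{ob}(\hat x,\hat t)=u(\hat x,\hat t)$, and since $u_{ob}\gs u$ everywhere with equality at $(\hat x,\hat t)$, the same point is also a maximum of $u-\varphi$. Because $u$ is a subsolution of the \emph{free} equation there is no threshold to check, and the PDE inequality $\varphi_t+F_\ast(D\varphi,D^2\varphi)+k|D\varphi|\ls 0$ follows immediately at $(\hat x,\hat t)$. This is exactly what Definition \ref{defsol} asks for.

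For item (ii), I would reproduce the same argument, but now $u$ is only a subsolution of an obstacle problem, and the free subsolution property is replaced by the subsolution property of $u$ with obstacles $u^-,u^+$. So set $w=u\vee v^-$; it is usc; the bound $v^-\ls w\ls u^+$ uses $u\ls u^+$ and $v^-\ls u^+$; the initial condition is $w(\cdot,0)=u(\cdot,0)\vee v^-(\cdot,0)\ls u_0\vee v^-(\cdot,0)$. At a test maximum $(\hat x,\hat t)$ of $w-\varphi$ with $w(\hat x,\hat t)>v^-(\hat x,\hat t)$, the same argument as above gives $w(\hat x,\hat t)=u(\hat x,\hat t)$ and makes $(\hat x,\hat t)$ a maximum of $u-\varphi$. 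The crucial observation is that the threshold activates the right side condition: since $v^-\gs u^-$, we have
\[
u(\hat x,\hat t)=w(\hat x,\hat t)>v^-(\hat x,\hat t)\gs u^-(\hat x,\hat t),
\]
so the subsolution property of $u$ with its original lower obstacle $u^-$ applies and yields the PDE inequality. The final ``in particular'' statement is then a one-line consequence of Remark \ref{remdef}: enlarging the upper obstacle from $u^+$ to $v^+$ only relaxes the constraint, and the viscosity inequalities for $u$ at points where $u>u^-$ are unchanged.

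There is no genuine obstacle in the argument; the only point requiring care is the bookkeeping that the strict inequality $u_{ob}>u^-$ (resp.\ $u\vee v^->v^-$) at the test maximum forces $u_{ob}=u$ there, so that (a) the test maximum of $u_{ob}-\varphi$ is also a test maximum of $u-\varphi$, and (b) the threshold $u>u^-$ needed to invoke the original subsolution inequality is automatically satisfied. Once this is noted, everything reduces to the definitions.
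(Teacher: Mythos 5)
Your proposal is correct and follows essentially the same route as the paper: the dichotomy at the test maximum, the observation that $u\vee u^->u^-$ there forces $u\vee u^-=u$ so the point is also a maximum of $u-\varphi$, and in part (ii) the chain $u>v^-\gs u^-$ to activate the original subsolution inequality. The only difference is that you spell out the routine verifications (semicontinuity, obstacle bounds, initial data) that the paper leaves implicit.
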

\begin{proof}
 The proof is quite simple: consider a smooth function $\varphi$ and some $x_0,t_0$ such that $\varphi - u \vee u^-$ has a maximum at $(x_0,t_0)$. Then, using the definition of subsolutions, either $u(x_0,t_0)\vee u^-(x_0,t_0) = u^-(x_0,t_0)$ and nothing has to be done, or $u(x_0,t_0) > u^-(x_0,t_0)$. In the second alternative $(x_0,t_0)$ is in fact a maximum of $u-\varphi$. Since $u$ is a viscosity subsolution of the motion, we have $\varphi_t + F_\ast (D\varphi,D^2\varphi) + k|D\varphi| \ls 0$, what was expected.

Let us now show the second part of the proposition. The initial condition $u \vee v^- \ls u_0 \vee v^- \vert_{t=0}$ is satisfied. Once again, we consider $\varphi$ smooth and $(x_0,t_0)$ such that $u \vee v^- - \varphi$ has a maximum at $(x_0,t_0)$. Then, either $u(x_0,t_0) \vee v^-(x_0,t_0) = v^-(x_0,t_0)$ and nothing has to be checked, or $u(x_0,t_0) > v^-(x_0,t_0)$. The latter implies that $u(x_0,t_0) > u^-(x_0,t_0)$, so $\varphi_t + F_\ast (D\varphi,D^2 \varphi)+ k|D\varphi|\ls 0$, what was wanted.
\end{proof}

\begin{lem}
 Let $\mathcal F$ be a  family of subsolutions of \eqref{vsol} and define $U(x,t) :=\sup\{u(x,t)),u\in \mathcal F\}$. Then, $U^\ast$ is a subsolution of \eqref{vsol}. \label{supsol}
\end{lem}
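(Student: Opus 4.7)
The plan is to mimic the standard Perron construction for viscosity solutions, taking care of the extra condition $u > u^-$ that triggers the subsolution inequality.

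First I would verify the three easy items in Definition \ref{defsol}. Upper semicontinuity of $U^\ast$ is free. Since every $u \in \mathcal F$ satisfies $u^- \ls u \ls u^+$ and both obstacles are continuous, passing to the pointwise supremum yields $u^- \ls U \ls u^+$, and then taking the usc envelope gives $u^- = (u^-)^\ast \ls U^\ast \ls (u^+)^\ast = u^+$. The initial condition $U^\ast(\cdot,0) \ls u_0$ follows similarly from the continuity of $u_0$.

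The substance is in the subsolution inequality. Suppose $\varphi \in \mathcal C^2$ and $(\hat x, \hat t) \in \Rn \times (0,T]$ is a maximum of $U^\ast - \varphi$ with $U^\ast(\hat x,\hat t) > u^-(\hat x, \hat t)$. By adding $|x-\hat x|^4 + (t-\hat t)^2$ to $\varphi$ (which leaves $\varphi_t, D\varphi, D^2\varphi$ unchanged at $(\hat x,\hat t)$) I may assume the maximum is strict. By definition of the usc envelope I pick $(x_n,t_n) \to (\hat x, \hat t)$ with $U(x_n,t_n) \to U^\ast(\hat x,\hat t)$, and for each $n$ an element $u_n \in \mathcal F$ with $u_n(x_n,t_n) \gs U(x_n,t_n) - 1/n$. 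On a small closed ball $\overline B$ around $(\hat x, \hat t)$, contained in $\Rn \times (0,T]$, each usc function $u_n - \varphi$ attains its maximum at some $(y_n, s_n)$. Up to a subsequence $(y_n,s_n) \to (y_\infty,s_\infty) \in \overline B$. Since $u_n \ls U^\ast$ and $U^\ast$ is usc, $\limsup u_n(y_n,s_n) \ls U^\ast(y_\infty,s_\infty)$, while $(u_n-\varphi)(y_n,s_n) \gs (u_n-\varphi)(x_n,t_n) \to (U^\ast - \varphi)(\hat x,\hat t)$. Combined, $(U^\ast - \varphi)(y_\infty,s_\infty) \gs (U^\ast - \varphi)(\hat x,\hat t)$, so the strict maximum forces $(y_\infty,s_\infty) = (\hat x,\hat t)$ and $u_n(y_n,s_n) \to U^\ast(\hat x,\hat t)$.

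The key point where the obstacle enters is next: because $u^-$ is continuous and $U^\ast(\hat x,\hat t) > u^-(\hat x,\hat t)$, for $n$ large we also have $u_n(y_n,s_n) > u^-(y_n,s_n)$, so the subsolution property of $u_n$ applies at the interior point $(y_n,s_n)$, yielding
\[
\varphi_t(y_n,s_n) + F_\ast(D\varphi(y_n,s_n),D^2\varphi(y_n,s_n)) + k(y_n,s_n)|D\varphi(y_n,s_n)| \ls 0.
\]
Since $\varphi \in \mathcal C^2$ and $k$ is continuous, all terms except the $F_\ast$ term pass to the limit continuously; for $F_\ast$ I take the liminf and use that $F_\ast$, being a lower semicontinuous envelope, is lsc. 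This yields the required inequality at $(\hat x,\hat t)$.

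The main obstacle is the transfer of the strict obstacle condition $U^\ast(\hat x,\hat t) > u^-(\hat x,\hat t)$ to the approximating points $(y_n,s_n)$; this is exactly the step where continuity of $u^-$ (provided by the uniform continuity assumption in the existence theorem) is crucial. Everything else is a straightforward adaptation of the Perron argument in \cite{c92user}.
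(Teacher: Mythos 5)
Your treatment of the viscosity inequality itself is correct and is, in substance, the argument the paper intends: the paper simply cites \cite{c92user}, Lemma 4.2, whose proof is exactly your scheme (strict-maximum perturbation, approximating sequence $u_n(y_n,s_n)\to U^\ast(\hat x,\hat t)$ with $(y_n,s_n)\to(\hat x,\hat t)$, then passing to the limit using lower semicontinuity of $F_\ast$), carried out with test functions rather than with the jets of Proposition \ref{limjet}. Your identification of where the obstacle enters --- transferring $U^\ast(\hat x,\hat t)>u^-(\hat x,\hat t)$ to $u_n(y_n,s_n)>u^-(y_n,s_n)$ via continuity of $u^-$ and the convergence $u_n(y_n,s_n)\to U^\ast(\hat x,\hat t)$ --- is precisely the ``obvious change due to obstacles'' the paper alludes to, and your handling of the constraint $u^-\ls U^\ast\ls u^+$ matches the paper's own computation verbatim.

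There is, however, one genuine error: the claim that ``the initial condition $U^\ast(\cdot,0)\ls u_0$ follows similarly from the continuity of $u_0$.'' It does not, and the analogy with the obstacles is misleading. The bound $u\ls u^+$ holds at \emph{every} $(x,t)$, so the $\limsup$ defining $U^\ast$ is controlled by the continuous function $u^+$ at every approaching point; by contrast, $u(\cdot,0)\ls u_0$ constrains the subsolutions only on the slice $t=0$, whereas $U^\ast(x,0)=\limsup_{(y,s)\to(x,0)}U(y,s)$ involves points with $s>0$ where no bound by $u_0$ is available. The paper explicitly flags this (``taking the semicontinuous envelope could break this inequality'') and repairs it not inside Lemma \ref{supsol} but afterwards, by constructing a continuous supersolution barrier $w^+$ attaining the initial datum and invoking the comparison principle, which gives $u\ls w^+$ for every subsolution, hence $U^\ast\ls(w^+)^\ast=w^+$ and $U^\ast(x,0)\ls u_0(x)$. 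You should either restrict the lemma's conclusion to the interior equation and the obstacle constraints, or supply such a barrier argument; as written, that step would fail. (Two minor points you should also tidy up: carry the perturbed test function $\tilde\varphi$ consistently through the limit, and justify applying the definition of subsolution at $(y_n,s_n)$, which is only a maximum of $u_n-\tilde\varphi$ over $\overline B$, by the standard local-versus-global maximum equivalence.)
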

To prove this lemma, we need the following proposition which will be useful later.
\begin{prop}
 Let $v$ be a upper semicontinuous function, $(x,t) \in \Rn \times \R$ and $(a,p,X) \in \mathcal P^{2,+} v(x,t)$. Assume there exists a sequence $(v_n)$ of usc functions which satisfy
\begin{enumerate}[i)]
 \item There exists $(x_n,t_n)$ such that $(x_n,t_n,v_n(x_n,t_n)) \to (x,t,v(x,t))$
\item $(z_n,s_n) \to (z,s)$ in $\Rn \times \R$ implies $\limsup v_n(z_n,s_n) \ls v(z,s).$
\end{enumerate}
Then, there exists $(\hat x_n,\hat t_n)\in \Rn\times \R,\; (a_n,p_n,X_n) \in \mathcal P^{2,+}v_n(\hat x_n,\hat t_n)$ such that
$$(\hat x_n ,\hat t_n, v_n(\hat x_n, \hat t_n ),a_n,p_n,X_n) \to (x,t,v(x,t),a,p,X).$$
\label{limjet}
\end{prop}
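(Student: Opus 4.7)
The plan is the standard stability argument for superjets: build a $C^2$ test function $\psi$ whose 2-jet at $(x,t)$ equals $(a,p,X)$ and for which $v-\psi$ attains a \emph{strict} local maximum at $(x,t)$; then use hypotheses (i) and (ii) to show that a maximizer $(\hat x_n,\hat t_n)$ of $v_n-\psi$ on a small closed ball around $(x,t)$ necessarily converges to $(x,t)$. The jet $(a_n,p_n,X_n)$ will then be read off as $(\psi_t,D\psi,D^2\psi)(\hat x_n,\hat t_n)$.

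To build $\psi$, set $q(y,s):=v(x,t)+a(s-t)+\scal{p}{y-x}+\tfrac12\scal{X(y-x)}{y-x}$. The definition of $\mathcal J^{2,+}$ says that
$$\mu(r):=\sup\bigl\{(v(y,s)-q(y,s))^+ \, : \, |y-x|^2+|s-t|\ls r\bigr\}$$
is nondecreasing with $\mu(r)/r\to 0$ as $r\to 0^+$. A classical integration construction produces a $C^2$ function $\chi:[0,\infty)\to[0,\infty)$ with $\chi\gs\mu$ and $\chi(0)=\chi'(0)=0$. Setting
$$\psi(y,s):=q(y,s)+\chi\bigl(|y-x|^2+|s-t|^2\bigr)+\rho\bigl(|y-x|^4+(s-t)^2\bigr)$$
for a small $\rho>0$, one checks that $\psi\in\mathcal C^2$, $(\psi_t,D\psi,D^2\psi)(x,t)=(a,p,X)$, $\psi(x,t)=v(x,t)$, and $v-\psi<0$ on a punctured neighbourhood of $(x,t)$ (the $\chi$-term absorbs the $o$-remainder; the $\rho$-term provides strictness).

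Next I would fix $r>0$ small enough that $v-\psi<0$ on $\bar B_r(x,t)\setminus\{(x,t)\}$. By (i), $(x_n,t_n)$ lies in $\bar B_r$ for $n$ large and $(v_n-\psi)(x_n,t_n)\to 0$. Upper semicontinuity of $v_n$ and compactness of $\bar B_r$ give a maximizer $(\hat x_n,\hat t_n)\in\bar B_r$ of $v_n-\psi$, with value at least $(v_n-\psi)(x_n,t_n)$. Extracting a subsequence with $(\hat x_n,\hat t_n)\to(\hat x,\hat t)\in\bar B_r$, hypothesis (ii) yields
$$0\ls \limsup\bigl(v_n(\hat x_n,\hat t_n)-\psi(\hat x_n,\hat t_n)\bigr)\ls v(\hat x,\hat t)-\psi(\hat x,\hat t),$$
so strictness forces $(\hat x,\hat t)=(x,t)$. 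Uniqueness of the limit gives convergence of the whole sequence, and the same inequality chain pins down $v_n(\hat x_n,\hat t_n)\to v(x,t)$. For $n$ large, $(\hat x_n,\hat t_n)$ is interior to $\bar B_r$, hence a genuine local maximum of $v_n-\psi$, so $(a_n,p_n,X_n):=(\psi_t,D\psi,D^2\psi)(\hat x_n,\hat t_n)\in\mathcal J^{2,+}v_n(\hat x_n,\hat t_n)$ and this triple converges to $(a,p,X)$ by continuity of the derivatives of $\psi$.

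The only non-routine point is the construction of $\psi$ in the first step: one has to dominate the $o$-remainder coming from the definition of the superjet by a $C^2$ function with vanishing 1-jet at the origin, so that after adding the strict quartic-in-space, quadratic-in-time perturbation the test function makes contact with $v$ \emph{strictly} from above. Everything else is a compactness plus semicontinuous-limit argument closely patterned on the stability results for jets in \cite{c92user}.
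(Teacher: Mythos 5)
Your overall strategy --- build a $\mathcal C^2$ test function with the prescribed $2$-jet touching $v$ strictly from above, maximize $v_n-\psi$ on a compact neighbourhood, and use (i), (ii) to force the maximizers to converge to $(x,t)$ --- is exactly the standard argument behind the result the paper cites; the paper itself gives no proof, referring to \cite{c92user}, Lemma 4.2 ``with obvious changes due to the parabolic situation''. The compactness and semicontinuity half of your argument is correct as written.

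The gap sits precisely in those ``parabolic changes'', namely in the construction of $\psi$. The remainder in the definition of $\mathcal J^{2,+}$ is $o(|s-t|+|y-x|^2)$: it is only \emph{first} order in time. Your modulus $\mu$ is accordingly taken over $\{|y-x|^2+|s-t|\ls r\}$ and satisfies $\mu(r)=o(r)$, but you then compose $\chi$ with $|y-x|^2+|s-t|^2$ and add $\rho\bigl(|y-x|^4+(s-t)^2\bigr)$. Along the time axis $y=x$ this gives $\psi(x,s)-q(x,s)=\chi(|s-t|^2)+\rho(s-t)^2=O(|s-t|^2)$, which cannot dominate an admissible remainder such as $|s-t|^{3/2}$: taking $v=q+|s-t|^{3/2}$ near $(x,t)$, one has $(a,p,X)\in\mathcal J^{2,+}v(x,t)$ yet $v-\psi>0$ for $s$ close to $t$, $s\neq t$, so $(x,t)$ is not even a local maximum of $v-\psi$. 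Replacing the argument of $\chi$ by $|y-x|^2+|s-t|$ restores domination but destroys differentiability in $s$ across $\{s=t\}$ at points $y\neq x$. The standard fix is to separate variables: using $\mu(a+b)\ls\mu(2a)+\mu(2b)$, dominate the remainder by $\chi_1(|y-x|^2)+\chi_2(|s-t|)$, where $\chi_1$ is as in your construction and $\chi_2$ is $\mathcal C^1$ with $\chi_2(0)=\chi_2'(0)=0$ and $\chi_2\gs\mu(2\,\cdot)$ (obtained by the same integration device). Then $s\mapsto\chi_2(|s-t|)$ is $\mathcal C^1$ with vanishing derivative at $s=t$, which is all a parabolic test function needs in the time variable, the strictness term $(s-t)^2$ can be kept, and the rest of your proof goes through unchanged.
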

The proof of the proposition and the lemma can be found in \cite{c92user}, Lemma 4.2 and Proposition 4.3 (with obvious changes due to the parabolic situation and obstacles).

In our way to prove that $W$ is the solution of \eqref{vsol}, we need to show that it is a subsolution of \eqref{vsol}. Lemma \ref{supsol} shows that $W^\ast$ is a subsolution of \eqref{vsol} with obstacles, but without taking the initial condition into account. Indeed even if for all subsolution, one has $u(x,0) \ls u_0(x)$, which implies $W(x,0) \ls u_0(x)$, taking the semicontinuous envelope could break this inequality. We thus need to build some continuous barriers which will force $W^\ast$ to remain below $u_0$ at time zero. More precisely, we build a continuous supersolution $w^+$ which gets the initial data $u_0$. Then, by comparison principle, every subsolution $u$ will satisfy $u \ls w^+$ and $W \ls w^+$. Taking the envelope will yield
$$W^\ast \ls (w^+)^\ast = w^+$$ which will imply
$$W^\ast(x,0) \ls u_0(x).$$

Similarly, we build a continuous subsolution $w^-$ which also gets the initial data. By the very definition of $W$, it gives $W(x,0) \gs u_0(x).$

For technical reasons, we begin building the solution in the case where $k=0$.
\subsubsection{Construction of barriers in the non forcing case}
\label{barriernf}

Let us construct $w^-$. Without a forcing term, we note that for all $\xi \in \R^n$ and $A,B$ with $B$ sufficiently large relatively to $A$,
$$\tilde h(x,t)=-(A |x-\xi|^2+ B t)$$ is a subsolution of \eqref{vsol} in a neighborhood of $\xi$ but with neither initial conditions nor obstacles. 
We define
$$h(x,t) =  h(x,t) \vee u^-(x,t).$$
Then, $h$ is a subsolution (on the full domain, since as soon $|x-\xi| \gs \Vert u^- \Vert_\infty/A$, $h(x,t) = u^-(x,t)$) of \eqref{vsol}, for $A$ sufficiently large uniformly in $\xi$. We then define
$$\theta_\xi(r)= \inf\{u_0(y) \, \mid \, A |y-\xi|^2 + r \ls 0\}$$
The function $\theta_\xi$ is bounded, non decreasing, continuous and satisfies $\theta_\xi(0)=u_0(\xi)$ and $\theta_\xi (-A |x-\xi|^2-Bt) \ls u_0(x).$ As the equation is geometric, $\theta_\xi(-A|x-\xi|^2 - Bt) \vee u^-(x,t)$ is also a subsolution. Let us then define
$$\phi(x,t)= \left( \sup_\xi \theta_\xi(-A|x-\xi|^2 - Bt)  \vee u^-(x,t) \right)^\ast.$$
Since $\theta_\xi (-A|x-\xi|^2-Bt) \ls u_0(x)$ and $u_0$ is continuous, we also have $\phi(x,t) \ls u_0$. In addition, we can check that 
\begin{equation}\phi(x,t) \gs \theta_x(-A|x-x|^2 -Bt) = \theta_x(-Bt)\gs u_0(x) - \omega(\sqrt{\frac{Bt}{A}}).\label{controlinit}\end{equation} Hence, $\phi(x,0) = u_0(x)$.
Thanks to Lemma \ref{supsol}, $\phi$ is a subsolution with $\phi(x,0)\ls u_0(x).$
We conclude this proof defining
$$w^-(x,t)= \left(\phi(x,t)-\omega(t)\right) \vee u^-(x,t).$$
It is clear that $w^-$ is a subsolution with obstacles. Indeed, by definition, $w^- \gs u^-$. Moreover, $\phi(x,t) - \omega(t) \ls u_0(x) - \omega(t) \ls u^+(x,0) - \omega(t) \ls u^+(x,t).$ Proposition \ref{wedgevee} concludes the proof.

The other barrier $w^+$ is obtained similarly: 
$$w^+ = \left( \inf_\xi \theta^\xi(A|x-\xi|^2 + Bt)  \wedge u^+(x,t) \right)_\ast \wedge u^+(x,t)$$
with
$$\theta^\xi(r) = \sup \{u_0(y) \, \mid \, A|y-\xi|^2 - r \ls 0 \}.$$

\subsubsection{Perron's method}
We have just seen that, thanks to the barriers, $W^\ast$ is a subsolution of \eqref{vsol}. We now want to show that $W$ is actually a subsolution and that it is also a supersolution.

First, we show uniform continuity of the function $W$, which shows that $W^\ast = W$ and therefore, that

\begin{rem}
If $k(x,t)=0$, then $W$ is $\omega$-uniformly continuous in space. In time, $W$ is uniformly continuous with modulus $\tilde \omega : r \mapsto \max(\omega(r), \omega(\sqrt{\frac{Br}{A}}))$, where $B$ is the constant introduced when constructing the barriers. Indeed, the proof is contained in the following lemma.
\begin{lem}
Let $u(x,t)$ be a subsolution of \eqref{vsol} with no forcing term (and $u_0,u^-,u^+$ $\omega$-uniformly continuous in space and time). Then, for $t>0$ and $z\in \R^n$,
$$u_{z,\delta}(x,t)=(u(x+z,t+\delta)-\omega(|z|) -\tilde \omega(|\delta|)) \vee u^-(x,t)$$ is also a subsolution.
\end{lem}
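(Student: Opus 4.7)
The plan is to verify, in turn, the four ingredients in Definition \ref{defsol}: upper semicontinuity, the obstacle bounds $u^-\ls u_{z,\delta}\ls u^+$, the initial condition $u_{z,\delta}(\cdot,0)\ls u_0$, and the viscosity inequality on $\{u_{z,\delta}>u^-\}$. Upper semicontinuity is immediate since $u_{z,\delta}$ is the supremum of two usc functions, and the other three items follow from the translation invariance of the equation (no explicit $(x,t)$- or $u$-dependence in $F$, and $k\equiv 0$) together with careful bookkeeping against the moduli $\omega$ and $\tilde\omega$.

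First I would treat the viscosity inequality. Since $F$ is independent of $(x,t)$ and of $u$ and $k=0$, the shifted function $\tilde u(x,t):=u(x+z,t+\delta)-\omega(|z|)-\tilde\omega(|\delta|)$ is itself a subsolution of the motion equation, but with the shifted lower obstacle $\tilde u^-(x,t):=u^-(x+z,t+\delta)-\omega(|z|)-\tilde\omega(|\delta|)$. Using the uniform continuity of $u^-$ (modulus $\omega$) together with $\tilde\omega\gs\omega$, one sees that $\tilde u^-(x,t)\ls u^-(x,t)$. Hence at any $(x_0,t_0)$ where $\tilde u(x_0,t_0)>u^-(x_0,t_0)$ we also have $\tilde u(x_0,t_0)>\tilde u^-(x_0,t_0)$, so the viscosity inequality for $\tilde u$ as a subsolution applies at $(x_0,t_0)$. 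Exactly as in the proof of Proposition \ref{wedgevee} (i), this property survives when we take the maximum with $u^-(x,t)$, so $u_{z,\delta}$ satisfies the viscosity inequality on $\{u_{z,\delta}>u^-\}$.

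Next I would verify the obstacle bounds. The lower obstacle $u_{z,\delta}\gs u^-$ is automatic from the definition. For the upper one, the uniform continuity of $u^+$ gives
\[u(x+z,t+\delta)-\omega(|z|)-\tilde\omega(|\delta|)\ls u^+(x+z,t+\delta)-\omega(|z|)-\tilde\omega(|\delta|)\ls u^+(x,t),\]
using $\tilde\omega(|\delta|)\gs\omega(|\delta|)$; combined with $u^-\ls u^+$, this yields $u_{z,\delta}\ls u^+$.

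The main difficulty will be the initial condition $u_{z,\delta}(x,0)\ls u_0(x)$, and it is here that the specific form of $\tilde\omega$ really matters. The $u^-$ part of the supremum is fine, since $u^-(x,0)\ls u_0(x)$ by assumption. For the other term, $u$ is only known to satisfy $u(\cdot,0)\ls u_0$, so I cannot plug $u$ directly at time $\delta>0$; instead, I would invoke the upper barrier $w^+$ built in Section \ref{barriernf} (the symmetric counterpart of $w^-$), which is a continuous supersolution with $w^+(\cdot,0)=u_0$ and which by construction satisfies $w^+(y,t)\ls u_0(y)+\omega(\sqrt{2nt})$, analogously to \eqref{controlinit}. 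By the comparison principle (Proposition \ref{compple}) applied to $u$ and $w^+$, we get $u\ls w^+$, hence
\[u(x+z,\delta)\ls u_0(x+z)+\omega(\sqrt{2n|\delta|})\ls u_0(x)+\omega(|z|)+\omega(\sqrt{2n|\delta|})\ls u_0(x)+\omega(|z|)+\tilde\omega(|\delta|),\]
which is precisely the needed bound. This step is exactly the motivation for the definition $\tilde\omega(r)=\max(\omega(r),\omega(\sqrt{2nr}))$.
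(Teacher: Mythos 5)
Your proposal is correct and follows essentially the same route as the paper: the viscosity inequality via translation invariance and the observation that the shifted lower obstacle lies below $u^-$ (so the $\vee\,u^-$ operation is handled as in Proposition \ref{wedgevee}), the obstacle bound from the modulus of $u^+$ with $\tilde\omega\gs\omega$, and the initial condition via comparison with the upper barrier $w^+$ and the control \eqref{controlinit}. No gaps.
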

\begin{proof}
To begin, we notice that $u(x+z,t+\delta)-\omega(|z|)-\tilde \omega(|\delta|)\ls u^+(x,t)$. \\
Now, let $\varphi$ be a smooth function with $\forall x,t$, $u_{z,\delta}(x,t)\ls \varphi(x,t)$ with equality at $(\overline x,\overline t)$. Then, either $u_{z,\delta}(\overline x,\overline t) = u^-(\overline x,\overline t)$, and nothing has to be done, or $u_{z,\delta}(\overline x,\overline t) > u^-(\overline x,\overline t)$. In the second alternative, we have 
$$u(\overline x+z,\overline t + \delta)-\omega(|z|) -\tilde\omega(\delta) > u^-(\overline x,\overline t) = u^-(\overline x+z,\overline t + \delta)+(u^-(\overline x,\overline t)-u^-(\overline x+z,\overline t+\delta))$$
hence
$$u(\overline x+z,\overline t+\delta) > u^-(\overline x+z,\overline t+\delta)+\underbrace{(u^-(\overline x,\overline t)-u^-(\overline x+z,\overline t+\delta) + \omega(|z|) + \tilde\omega(|\delta|))}_{\gs 0} \gs u^-(\overline x+z,\overline t+\delta).$$
As $u$ is a subsolution at $(\overline x+z,\overline t+\delta)$ and $u(x+z,t+\delta)\ls \varphi(x,t)+\omega(|z|) + \tilde\omega(|\delta|)$ with equality at $(\overline x+z,\overline t+\delta)$, one can write, with $y=x+z, s=t+\delta$,
$$u(y,s)\ls \varphi(y-z,s-\delta)+\omega(|z|) + \tilde\omega(|\delta|) =:\phi(y,s),$$ equality at $(\overline y,\overline s)$ (with $\overline{y} := \overline x +z$ and $\overline s = \overline t + \delta$), and deduce that $\phi_t+F_\ast (D\phi(\overline y,\overline s),D^2\phi(\overline y,\overline s))\ls 0.$ Since $D\phi(\overline y,\overline s)=D\varphi(\overline x, \overline t)$ (so are the time and spatial second derivatives), we get $$\varphi_t+F_\ast (D\varphi(\overline x,\overline t),D^2\varphi(\overline x,\overline t))\ls 0,$$ what was expected.

Concerning the initial conditions, we have (we use \eqref{controlinit} and the comparison principle Proposition \ref{compple} between $u$ and $w^+$)
$$u(x+z,0+\delta) - \omega(|z|) - \tilde\omega(\delta) \ls w^+(x+z,\delta) - \omega(|z|) - \tilde\omega(|\delta|) \ls u_0(x+z) - \omega (|z|) \ls u_0(x).$$
\end{proof}
Applying this lemma to $W$ shows $(x,t)\mapsto W(x+z,t+\delta) - \omega(|z|) - \tilde\omega(|\delta|) \vee u^-(x+z,t)$ is a subsolution. By definition of $W$, one can write
$$W(x,t) \gs (W(x+z,t+\delta) - \omega(|z|) -\tilde \omega(\delta)) \vee u^-(x+z,t) \gs W(x+z,t+\delta)-\omega(|z|) - \tilde\omega(\delta)$$
which shows exactly that $W$ is uniformly continuous.
\label{noforcing}
\end{rem}

We now want to show that $W$ is in fact a supersolution of \eqref{vsol}. We need the following lemma which is adapted from \cite{c92user}, Lemma 4.4.
\begin{lem}
Let $u$ be a subsolution of \eqref{vsol}. If $u_\ast$ fails to be a solution of $u_t+F^\ast (Du,D^2u)+k|Du| \gs 0$ at some $(\hat x,\hat t)$ (there exists $(a,p,X) \in \mathcal P^{2,-} u_\ast (\hat x,\hat t)$ such that $a+F^\ast (p,X) + k|p| <0$), then for all sufficiently small $\kappa$, there exists a solution $u_\kappa$ of $u_t+F_\ast (Du,D^2u)+k|Du| \ls 0$ satisfying $u_\kappa (x,t) \gs u(x,t)$, $\sup \limits_{\Rn} (u_\kappa-u) >0$, $u_\kappa (x,t) \ls u^+(x,t)$ and such that $u$ and $u_\kappa$ coincide for all $|x-\hat x|,|t-\hat t| \gs \kappa.$ \label{notsol}
\end{lem}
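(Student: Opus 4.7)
The plan is the standard Perron ``bump-up'' construction, transferred to the obstacle setting. Note first that the hypothesis ``$u_\ast$ fails to be a supersolution at $(\hat x,\hat t)$'' implicitly forces $u_\ast(\hat x,\hat t) < u^+(\hat x,\hat t)$, since otherwise the supersolution inequality is not even required. I would introduce the smooth parabolic paraboloid
\[
\psi(x,t) := u_\ast(\hat x,\hat t) + a(t-\hat t) + \langle p, x-\hat x\rangle + \tfrac12\langle X(x-\hat x), x-\hat x\rangle + \delta - \gamma\bigl(|x-\hat x|^2 + (t-\hat t)^2\bigr),
\]
with $\gamma,\delta>0$ small parameters to be chosen, and then set $u_\kappa := u \vee \psi$ inside a small cylinder around $(\hat x,\hat t)$ and $u_\kappa := u$ outside.

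First I would show $\psi$ is a strict classical subsolution of $u_t + F_\ast(Du,D^2u) + k|Du| \le 0$ on a small ball $B_r(\hat x,\hat t)$: at $(\hat x,\hat t)$ one has $\psi_t=a$, $D\psi=p$, and $D^2\psi=X-2\gamma I$, and the assumed strict inequality $a + F^\ast(p,X) + k(\hat x,\hat t)|p| < 0$, combined with $F_\ast \le F^\ast$, upper semicontinuity of $F^\ast$ at $(p,X)$, monotonicity of $F_\ast$ in the matrix variable, and continuity of $k$, delivers strict negativity on a neighborhood. Second, the subjet expansion
\[
u_\ast(x,t) \ge u_\ast(\hat x,\hat t) + a(t-\hat t) + \langle p, x-\hat x\rangle + \tfrac12\langle X(x-\hat x), x-\hat x\rangle + o\bigl(|t-\hat t| + |x-\hat x|^2\bigr)
\]
yields $\psi - u_\ast \le \delta - \gamma(|x-\hat x|^2 + (t-\hat t)^2) + o(\cdot)$; choosing $\delta \ll \gamma \kappa^2$ and $\kappa$ small enough (with $\kappa < \hat t \wedge r$), one obtains $\psi < u_\ast \le u$ on a shell near the boundary of the cylinder $C_\kappa := \{\max(|x-\hat x|, |t-\hat t|) < \kappa\}$, so that the glued function $u_\kappa$ is upper semicontinuous. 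Third, picking $(x_n,t_n) \to (\hat x,\hat t)$ with $u(x_n,t_n) \to u_\ast(\hat x,\hat t)$ gives $\psi(x_n,t_n) - u(x_n,t_n) \to \delta > 0$, proving $\sup(u_\kappa - u) > 0$. Fourth, continuity of $u^+$ together with $u_\ast(\hat x,\hat t) < u^+(\hat x,\hat t)$ allows a further shrinking of $\delta$ and $\kappa$ to enforce $\psi \le u^+$ on $C_\kappa$, whence $u_\kappa \le u^+$ globally.

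Verification that $u_\kappa$ is a subsolution of \eqref{vsol} is then routine. Outside $C_\kappa$, $u_\kappa = u$ and nothing has to be checked. At an interior maximum point $(\bar y,\bar s)$ of $u_\kappa - \varphi$ with $u_\kappa(\bar y,\bar s) > u^-(\bar y,\bar s)$, either $u_\kappa(\bar y,\bar s) = u(\bar y,\bar s)$, in which case $\varphi$ also touches $u$ from above and the subsolution property of $u$ applies directly; or $u_\kappa(\bar y,\bar s) = \psi(\bar y,\bar s) > u(\bar y,\bar s)$, in which case $\varphi$ touches the smooth $\psi$ from above, so $D\varphi = D\psi$, $\varphi_t = \psi_t$, and $D^2\psi \le D^2\varphi$ at $(\bar y,\bar s)$, and monotonicity of $F_\ast$ in the matrix variable transfers the strict classical inequality for $\psi$ to $\varphi$. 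The subtle step on which the whole argument pivots is tuning $\gamma,\delta,\kappa$ so that all four requirements on $\psi$ (classical strict subsolution inequality, boundary domination by $u$, strict gain at some interior point, upper bound by $u^+$) hold simultaneously; the degeneracy of $F$ at $p=0$ is handled by working exclusively with the envelopes $F^\ast$ and $F_\ast$ rather than $F$ itself.
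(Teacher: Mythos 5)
Your construction is the same as the paper's: the quadratic bump $\psi$ built from the subjet $(a,p,X)$ plus $\delta - \gamma(\cdots)$, made a strict classical subsolution via upper semicontinuity of $F^\ast$, dominated by $u$ on a shell by the subjet expansion with $\delta \ll \gamma\kappa^2$, kept below $u^+$ by continuity and the observation that failure of the supersolution test forces $u_\ast(\hat x,\hat t)<u^+(\hat x,\hat t)$, and glued with $u$ via the sup-of-subsolutions lemma. The argument is correct and matches the paper's proof in all essentials (your symmetric time penalization $(t-\hat t)^2$ versus the paper's one-sided $-(t-\hat t)$ is an immaterial variant).
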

\begin{proof}
We can suppose that $u_\ast$ fails to be a supersolution at $(0,1)$ (this implies in particular $u_\ast(0,1)<u^+(0,1)$). We get $(a,p,X) \in \mathcal P^{2,-} u_\ast (0,1)$ such that $a+F^\ast (p,X) +k(0,1)|p| <0$. We introduce for $\gamma,\delta,r >0$,
$$u_{\delta,\gamma} (x,t)=u_\ast(0,1)+\delta +\scal{p}{x} + a(t-1) +\frac 12 \scal{Xx}{x} - \gamma (|x|^2+t-1).$$
By upper semicontinuity of $F^\ast $, $u_{\delta,\gamma}$ is a subsolution of $u_t+F^\ast (Du,D^2u)+k|Du| \ls 0$ on $B_r((0,1))$ for $\gamma, \delta,r$ sufficiently small. \\
Since
$$u(x,t) \gs u_\ast(x,t) \gs u_\ast(0,1)+a(t-1)+\scal{p}{x} + \frac 12 \scal{Xx}{x} +o(|x|^2)+o(|t-1|),$$
choosing $\delta = \gamma \frac{r^2+r}{8}$, we get $u(x,t) > u_{\delta,\gamma}(x,t)$ for $\frac{r}2 \ls |x|,|t-1|\ls r$ and $r$ sufficiently small. Moreover, we can reduce $r$ again to have $u_{\delta,\gamma} \ls u^+$ on $B_r$ (Choosing $r$ sufficiently small, one has $\delta$ sufficiently small and $u_{\delta,\gamma} (0,1)-u_\ast(0,1) = \delta < u^+(0,1)-u_\ast(0,1)$. By continuity, one can find a smaller $r$ such that  $u_{\delta,\gamma} (x,t)< u^+(x,t)$ for all $\frac r2 \ls |x|, |t-1| \ls r$.).  \\
Thanks to Lemma \ref{supsol}, the function
$$\tilde u(x,t) = \left\{ \begin{matrix} \max(u(x,t),u_{\delta,\gamma}(x,t)) \text{ if }|x,t-1|<r \\ u(x,t) \text{ otherwise} \end{matrix} \right.$$
is a subsolution of \eqref{vsol} (with initial conditions if $r$ is small enough).
\end{proof}

Now, we saw that $W$ is a subsolution of \eqref{vsol} (in particular, $W \ls u^+$). If it is not a supersolution at a point $\hat x,\hat t$ , Lemma \ref{notsol} provides $W_\kappa \gs W$ subsolutions of \eqref{vsol} (with initial condition, even if we have to reduce $r$ again, to make $t$ stay far from zero), which is a contradiction with the definition of $W$. \\
Finally, $W$ is the expected solution of \eqref{vsol}.

\subsubsection{With forcing term}
\begin{enumerate}
\item We assume at this point only that $u^-,u^+$ and $u_0$ are $K$-Lipschitz in space. Then, thanks to Remark \ref{noforcing}, there exists a $K$-Lipschitz (in space) solution $\psi$ of the non forcing term equation. Let us set $w^-(x,t)=(\psi(x,t)+\Vert k \Vert_\infty K t) \vee u^-(x,t).$ It satisfies, as soon as $w^->u^-$,
$$u_t-\Vert k \Vert_\infty K +F(Du,D^2u) = 0, \qquad u(x,0)=u_0(x).$$
As a consequence, $w^-$ is a continuous subsolution of \eqref{vsol} (with forcing term) satisfying $w^-(x,0)=u_0(x)$. It is a barrier as in \ref{barriernf}. We build $w^+$ in a similar way and apply Perron's method to see that $W$ is a solution.
\item Here, $u^+$, $u^-$ and $u_0$ are only $\omega$-uniformly continuous. For all $K >0$, let $u^0_K = \min\limits_{y} u_0(y) + K|x-y|$, $u^+_K(x,t)=\max\limits_{y} u^+(y,t) - K|x-y|$ and $u^-_K=\min\limits_{y} u_0(y)+K|x-y|.$ These three new function are $K$-Lipschitz in space and converge uniformly (in space) to $u_0,u^+$ and $u^-$ when $K \to \infty.$ Moreover, as $u_0,u^+,u^-$ are $\omega$-uniformly continuous, so are they. \\
Thanks to the previous point, for every $K$, there exists a solution $u_K$ of \eqref{vsol} with obstacles $u^+_K,u^-_K$ and with initial data $u^0_K$, which is (thanks to the following proposition \ref{regularsol}, which is admitted for a little time) uniformly continuous with same moduli on $[0,T]$ for every $T$. One can define, thanks to Ascoli's theorem
$$u(x,t)=\lim_n u_{K_n}(x,t).$$
The function $u$ is continuous. We have to check that it is the solution of the motion with obstacles $u^\pm$. 

It is clear that $u^- \ls u \ls u^+$. Let $\varphi$ be a smooth function and $(\hat x,\hat t)$ a maximum point of $u-\varphi$ such that $u(\hat x, \hat t) - u^-(\hat x,\hat t) =: \eta >0$. One can assume that the maximum is strict. We then choose $\varepsilon$ such that 
$$\forall (x,t) \in B_\varepsilon(\hat x,\hat t), \quad u(x,t)-u^-(x,t) \gs \frac {3\eta} 4.$$
Let $$\delta := \min_{\partial B_\varepsilon} |u - \varphi|.$$
It is positive (since the maximum is strict, possibly reducing $\eps$). We choose $n_0$ such that 
$$\forall n \gs n_0, \; \Vert u-u_{K_n} \Vert_{L^\infty(B_\varepsilon)}, \ \Vert u^- - u^-_{K_n} \Vert_{L^\infty(B_\varepsilon)} \ls \max\left( \frac{\eta}{4}, \, \frac{\delta}{2} \right).$$
Then, for every $n \gs n_0$, $u_{K_n} - \varphi$ has a maximum $(x_n,t_n)$ on $B_\varepsilon$ reached out of $u^-_{K_n}$. It is easy to show that $(x_n,t_n) \to (\hat x,\hat t)$. Since $u_{K_n}$ is a viscosity subsolution, one can write, at $(x_n,t_n)$,
$$\varphi_t + F_\ast (D\varphi, D^2\varphi) + k|D\varphi| \ls 0.$$
By smoothness of $\varphi$ and semicontinuity of $F_\ast $, we get the same inequality at $(\hat x,\hat t)$.

We prove that $u$ is a supersolution using the same arguments.
\end{enumerate}

Let us conclude this section by an estimation of the solution's regularity, which is essentially \cite{for08}, Lemma 2.15 (except that the solution here is only uniformly continuous).
\begin{prop}
Let $u$ be the unique solution of \eqref{vsol}. Then $u$ is uniformly continuous in space. moreover, one as
$$\forall (x,y,t), \quad |u(x,t)-u(y,t)| \ls \omega(e^{Lt}|x-y|).$$
 \label{regularsol}
\end{prop}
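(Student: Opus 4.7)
The plan is a doubling-of-variables comparison of $u$ against itself, with a penalty engineered to produce the exact modulus $\omega(e^{Lt}\,\cdot\,)$. Choose $\omega$ a common modulus of continuity for $u_0$ and for the maps $x\mapsto u^\pm(x,t)$ uniformly in $t$; by replacing $\omega$ by a smooth concave majorant if necessary, I may assume $\omega\in\mathcal C^2$, concave, with $\omega'>0$ on $(0,\infty)$ and $\omega(r)\to\infty$ as $r\to\infty$. Fix $T>0$ and argue by contradiction: suppose $u(x_0,t_0)-u(y_0,t_0)>\omega(e^{Lt_0}|x_0-y_0|)$ for some $(x_0,y_0,t_0)$, and for small $\eta,\varepsilon>0$ introduce
$$\Phi(x,y,t) = u(x,t) - u(y,t) - \omega(e^{Lt}|x-y|) - \frac{\eta}{T-t} - \frac{\varepsilon}{2}(|x|^2+|y|^2).$$
Coercivity gives a maximizer $(\hat x,\hat y,\hat t)$ with $\Phi(\hat x,\hat y,\hat t)>0$ for $\eta$ small; the initial-data modulus excludes $\hat t=0$ and the barrier $\eta/(T-t)$ excludes $\hat t=T$.

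First I need both obstacle conditions to be inactive at $(\hat x,\hat y,\hat t)$. Indeed, if $u(\hat x,\hat t)=u^-(\hat x,\hat t)$ then, using $u(\hat y,\hat t)\gs u^-(\hat y,\hat t)$ and the spatial modulus of $u^-$,
$$\Phi(\hat x,\hat y,\hat t)\ls u^-(\hat x,\hat t)-u^-(\hat y,\hat t)-\omega(e^{L\hat t}|\hat x-\hat y|)\ls \omega(|\hat x-\hat y|)-\omega(e^{L\hat t}|\hat x-\hat y|)\ls 0,$$
contradiction; the symmetric case is ruled out analogously. Both viscosity inequalities are therefore available.

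Next I apply Ishii's lemma (exactly as in the proof of Proposition \ref{compple}) to $u(x,t)-u(y,t)$ with the $\mathcal C^2$ test function $\omega(e^{Lt}|x-y|)+\frac{\varepsilon}{2}(|x|^2+|y|^2)$; note $\hat x\neq \hat y$ since otherwise $\Phi(\hat x,\hat y,\hat t)\ls -\eta/(T-\hat t)<0$. With
$$p := e^{L\hat t}\,\omega'(e^{L\hat t}|\hat x-\hat y|)\,\frac{\hat x-\hat y}{|\hat x-\hat y|},$$
this yields $(a,p+\varepsilon\hat x,X+\varepsilon I)\in\overline{\mathcal J}^{2,+}u(\hat x,\hat t)$ and $(b,p-\varepsilon\hat y,Y-\varepsilon I)\in\overline{\mathcal J}^{2,-}u(\hat y,\hat t)$ with $X\ls Y$ and
$$a-b = \frac{\eta}{(T-\hat t)^2} + L\,e^{L\hat t}|\hat x-\hat y|\,\omega'(e^{L\hat t}|\hat x-\hat y|).$$
Subtracting the sub- and super-solution inequalities and controlling $k$ by $L|\hat x-\hat y|$, the key cancellation
$$L\,e^{L\hat t}|\hat x-\hat y|\,\omega'(e^{L\hat t}|\hat x - \hat y|) - L|\hat x-\hat y|\,|p| = 0$$
occurs (this is precisely what dictates the factor $e^{Lt}$ in the modulus) and leaves
$$\frac{\eta}{(T-\hat t)^2}\ls F^\ast(p-\varepsilon\hat y,Y-\varepsilon I) - F_\ast(p+\varepsilon\hat x,X+\varepsilon I) + C\varepsilon(|\hat x|+|\hat y|).$$

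To finish, I pass to the limit $\varepsilon\to 0$. Boundedness of $u$ gives $\varepsilon|\hat x|,\varepsilon|\hat y|\to 0$, and along a subsequence $(p,X,Y)\to(p_\infty,X_\infty,Y_\infty)$ with $X_\infty\ls Y_\infty$. Passing to the limit in $\Phi>0$ and using $\omega'>0$ off zero gives $p_\infty\neq 0$, so continuity of $F$ at nondegenerate momenta collapses $F_\ast$ and $F^\ast$ to $F$, and monotonicity (iii) delivers $F(p_\infty,Y_\infty)-F(p_\infty,X_\infty)\ls 0$, contradicting $\eta/(T-\hat t)^2>0$. The principal technical point is precisely verifying $p_\infty\neq 0$ to sidestep the singularity of $F$ at the origin; this follows from the strict positivity of $\Phi$ at the limit (which forces spatial separation of the two maximizers) combined with $\omega'>0$, exactly as in the endgame of the comparison proof.
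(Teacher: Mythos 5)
Your proof is correct and follows essentially the same route as the paper's: doubling of variables against the test function $\omega(e^{Lt}|x-y|)$ with quadratic localization and the $\eta/(T-t)$ barrier, exclusion of the obstacle contact via the modulus of $u^\pm$, Ishii's lemma, the exact cancellation of $L e^{L\hat t}|\hat x-\hat y|\,\omega'$ against the Lipschitz forcing term, and a regularization of the modulus keeping $\omega'>0$ so that $p_\infty\neq 0$ avoids the singularity of $F$. The only difference is bookkeeping: you fix one smooth concave majorant (so you literally obtain the estimate for that majorant), whereas the paper uses a sequence $\omega_n\downarrow\omega$ with $\omega_n'\gs 1/n^2$ and lets $n\to\infty$ at the end; to recover the stated inequality with $\omega$ itself you should likewise let your majorants decrease to $\omega$.
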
 
\begin{proof} 
First, it is well known that one can choose $\omega$ to be continuous and nondecreasing. Since $u$ and $v$ are bounded, $\omega \wedge (\Vert u \Vert_\infty + \Vert v \Vert_\infty)$ is a modulus too. In the following, we use this new modulus, still denoted by $\omega$.

Then, let $\rho_n$ be a $\mathcal C^\infty$ nondecreasing function on $[0,\infty[$ such that $0\ls \rho_n-\omega$, for all $r > n+1$, $\rho_n(r)=2N+1$, and for all $r\in [0,n], \rho_n(r)-\omega(r) \ls \frac{1}{n}.$ We define
$$\omega_n(r)=\rho_n+\frac{r}{n^2}.$$
It's clear that $\omega_n(r) \underset{n\to \infty}{\longrightarrow} \omega(r).$
Moreover, for a fixed $n$, $\omega'_n(r)$ is bounded and remains far from zero. In what follows, we work with $\omega_n$.

We will proceed as in Proposition \ref{compple}. Let $\phi(x,y,t)=\omega_n(e^{Lt}|x-y|).$ We will show by contradiction that $u(x,t)-u(y,t)\ls \phi(x,y,t).$ Assume that $$M:=\sup_{(x,y,t)\in \Rn \times \Rn \times [0,T)} u(x,t)-u(y,t)-\phi(x,y,t)>0.$$
As before, we introduce $$\tilde M = \sup_{x,y,t\ls T} u(x,t)-u(y,t)-\phi(x,y,t)-\frac \alpha2 (|x|^2+|y|^2)-\frac{\gamma}{T-t}.$$ For sufficiently small $\gamma, \alpha$, $\tilde M$ remains positive and is attained (at $\overline x,\overline y,\overline t<T)$. As $u_0$ is $\omega$-uniformly continuous, $\overline t >0.$ Moreover, since $u$ is continuous, $|\overline x- \overline y|$ is bounded away from zero, independently of $\alpha$ and $\gamma$.

By assumption, $u^-(\overline x,\overline t)\ls u^-(\overline y,\overline t) + \omega(|\overline x-\overline y|) \ls u^-(\overline y,\overline t) + \omega_n(|\overline x-\overline y|)\ls u(\overline y,\overline t)+\phi(\overline x,\overline y,\overline t)$ so $0\ls \tilde M < u(\hat x,\hat t)-u(\hat y,\hat t) - \phi(\hat x, \hat y,\hat t)$ forces $u(\overline x,\overline t) > u^-(\overline x,\overline t).$ Similarly, $u(\overline y,\overline t) < u^+(\overline y,\overline t).$ 

Applying Ishii's lemma (\cite{c92user}, Th. 8.3) to $\tilde u(x,t) = u(x,t)-\frac \alpha2 |x|^2$ and $\tilde v(y,t) = u(y,t) +\frac \alpha 2  |y|^2$ where $$\overline p = D_x\phi = \frac{\overline x-\overline y}{|\overline x - \overline y|} e^{L\overline t} \omega_n'(e^{L\overline t} |\overline x - \overline y|) = -D_y\phi \neq 0,$$
\begin{multline*}Z=D^2_x\phi = \frac{e^{L\overline t}}{|\overline x-\overline y|} \omega_n'(e^{L\overline t}|\overline x - \overline y|) I + \frac{(\overline x - \overline y) \otimes (\overline x-\overline y)}{|\overline x - \overline y|^3} e^{Lt} \omega_n'(e^{L\overline t}|\overline x - \overline y|)  \\+ \frac{(\overline x - \overline y) \otimes (\overline x-\overline y)}{|\overline x - \overline y|^2} e^{2L\overline t} \omega_n''(e^{L\overline t}|\overline x - \overline y|).\end{multline*}
and
$$A=D^2 \phi =\begin{bmatrix}
     Z & -Z \\ -Z & Z
    \end{bmatrix},$$
we get the following. For all $\beta$ such that $\beta A <I$, there exists $\tau_1,\tau_2\in \mathbb R$, $X,Y \in \mathcal S_n$ such that
$$\tau_1-\tau_2= \frac{\gamma}{(T-t)^2}+Le^{L\overline t} |\overline x - \overline y|\omega_n'(e^{Lt}|\overline x-\overline y|),$$
$$(\tau_1,\overline p+\alpha \overline x,X+\alpha I) \in \overline{ \mathcal P}^{2,+} u(\overline x,\overline t),$$
$$(\tau_2,\overline p-\alpha \overline y,Y-\alpha I) \in \overline{ \mathcal P}^{2,-} u(\overline y,\overline t),$$
$$\frac{-1}{\beta} \begin{bmatrix} I & 0 \\ 0 & I \end{bmatrix} \ls \begin{bmatrix} X & 0 \\ 0 & -Y \end{bmatrix} \ls (I-\beta A)^{-1} A.$$
In particular, the last equation provides $X \ls Y$.

As $u$ is a subsolution and a supersolution, one has
\begin{equation}\tau_1+k(\overline x,\overline t)|\overline p+\alpha \overline x| + F_\ast (\overline p + \alpha \overline x, X+\alpha I) \ls 0, \label{ishiifsousuc}\end{equation}
$$\tau_2-k(\overline y,\overline t)|\overline p-\alpha \overline y| + F^\ast (\overline p - \alpha \overline y, Y-\alpha I) \gs 0.$$

$X \ls Y$ in the last equation gives
\begin{equation}
 -\tau_2+k(\overline y,\overline t)|\overline p-\alpha \overline y| - F^\ast (\overline p - \alpha \overline y, X-\alpha I) \ls 0.\label{ishiifsuruc}
\end{equation}
Adding \eqref{ishiifsuruc} to \eqref{ishiifsousuc} leads to
\begin{multline}\frac{\gamma}{(T-\overline t)^2} + Le^{L \overline t} |\overline x - \overline y|\omega_n'(e^{L\overline t}|\overline x-\overline y|) - k(\overline x,\overline t)|\overline p+\alpha \overline x| + k(\overline y, \overline t)|\overline p-\alpha \overline y| \\+ F_\ast (\overline p + \alpha \overline x, X+\alpha I) - F^\ast (\overline p - \alpha \overline y, X-\alpha I) \ls 0. \label{somme}\end{multline}

Notice that 
\begin{multline}
Le^{L\overline t} |\overline x - \overline y|\omega_n'(e^{L\overline t}|\overline x-\overline y|) -k(\overline x, \overline t)|\overline p| + k(\overline y,\overline t)|\overline p|  \\ \gs Le^{L\overline t} |\overline x - \overline y|\omega_n'(e^{L\overline t}|\overline x-\overline y|) - L|\overline x - \overline y|e^{L\overline t} \omega_n'(e^{L\overline t}|\overline x - \overline y|) \gs 0. \label{uclipsch}
\end{multline}
Then, \eqref{somme} becomes 
$$
\frac{\gamma}{(T-\overline t)^2}+\left(|\overline p| - |\overline p + \alpha \overline x|\right)k(\overline x,\overline t) - \left(|\overline p| - |\overline p - \alpha \overline y|\right) k(\overline y,\overline t) + F_\ast (\overline p + \alpha \overline x, X+\alpha I) - F^\ast (\overline p - \alpha \overline y, X-\alpha I) \ls 0.
$$
Let $\alpha$ go to zero. $\overline p$ and $X$ are bounded: one assume they converge and still denote by $\overline p,X$ their limit. As $\vert \overline p \vert \gs \frac 1{n^2}$ ($\rho_n$ is nondecrasing), $F_\ast (\overline p,H) = F^\ast (\overline p,H)$ for all $H \in \mathcal S_n$. Moreover, $\alpha \overline x, \alpha \overline y \to 0$ and $k$ is bounded, hence
$$\frac{\gamma}{(T-\overline t)^2} \ls 0,$$ which is a contradiction. So $$u(x,t)-u(y,t) \ls \omega_n(e^{Lt}|x-y|).$$ 
It remains to let $n$ go to $+\infty$ to conclude.
\end{proof}
\subsection{The motion is geometric}
In all this subsection, a solution $u$ of the motion with initial data $u_0$ and obstacles $u^-$ and $u^+$ will be denoted by $u=[u_0,u^-,u^+].$ The corresponding equation will be denoted by $(u_0,u^-,u^+)$. \\
To agree with the geometric motion, we have to check that the zero level-set of the solution depends only on the zero level sets of the initial condition $u_0$ and of the obstacles $u^+$ and $u^-$. 

\begin{lem}
Let $u=[u_0,u^-,u^+]$ and $v=[v_0,v^-,v^+]$. We assume that $u_0\ls v_0$, $u^-\ls v^-$ and $u^+ \ls v^+$. Then, $u \ls v$.
\label{compob}
\end{lem}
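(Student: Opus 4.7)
The plan is to reduce this to a single application of the comparison principle (Proposition~\ref{compple}), by viewing $u$ and $v$ as sub/supersolutions of a common obstacle problem. Since $u^- \ls u^+ \ls v^+$, the pair $(u^-,v^+)$ is an admissible pair of obstacles, and it is strictly weaker than either $(u^-,u^+)$ or $(v^-,v^+)$. I want to use this common pair.

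The first step is to observe that $u$ is still a subsolution of the equation with obstacles $(u^-,v^+)$ and initial data $u_0$. This is exactly Remark~\ref{remdef}: the upper bound $u \ls u^+ \ls v^+$ holds automatically, while the viscosity inequality in the definition of a subsolution is triggered by the condition $u(\hat x,\hat t) > u^-(\hat x,\hat t)$, which involves only the lower obstacle (unchanged). Symmetrically, $v$ is a supersolution of the equation with obstacles $(u^-,v^+)$ and initial data $v_0$: the lower bound $v \gs v^- \gs u^-$ holds, and the viscosity inequality for supersolutions is triggered by $v(\hat x,\hat t) < v^+(\hat x,\hat t)$, involving only the upper obstacle (also unchanged).

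Now both $u$ and $v$ are sub- and super-solutions, respectively, of the \emph{same} obstacle problem on $(0,T)$, and the initial order $u(x,0) \ls u_0(x) \ls v_0(x) \ls v(x,0)$ holds. Applying the comparison principle (Proposition~\ref{compple}) directly yields $u \ls v$ on $\R^n \times (0,T)$ for every $T$, hence globally.

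There is no serious obstacle here; the argument works precisely because the sub/super-solution definitions decouple the role of the two obstacles ($u^-$ only constrains subsolutions, $v^+$ only constrains supersolutions), so one can always weaken one side or the other freely. The only point to double-check is the elementary chain $u^- \ls u^+ \ls v^+$ which ensures $(u^-,v^+)$ is a legitimate pair of obstacles.
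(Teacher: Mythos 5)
Your argument is correct and coincides with the paper's own proof: both reduce to the common obstacle pair $(u^-,v^+)$ via Remark~\ref{remdef} (a subsolution remains one when the upper obstacle is raised, a supersolution when the lower obstacle is lowered) and then invoke the comparison principle, Proposition~\ref{compple}. Nothing further is needed.
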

\begin{proof}
This proposition is obvious thanks to Remark \ref{remdef}. Indeed, $u$ is a subsolution of $(u_0,u^-,u^+)$ so is a subsolution of $(u_0,u^-,v^+)$ whereas $v$ is a supersolution of $(v_0,v^-,v^+)$, so of $(u_0,u^-,v^+).$ The comparison principle implies 
$$ u \ls v.$$
\end{proof}

\begin{prop}
Let $u$ be the solution of \eqref{motion} with obstacles $u^+$ and $u^-$, and let $\phi$ be a continuous nondecreasing function $ [-\Vert u^- \Vert,\Vert u^+ \Vert] \to \R$ such that  $\{\phi = 0\} = \{0\}$. Then, the solutions 
$$[u_0 \wedge (\phi(u^+) \vee u^-) \vert_{t=0},u^-,\phi(u^+)\vee u^-],$$ 
$$(u_0\vee (\phi(u^-) \wedge u^+)\vert_{t=0}, \phi(u^-) \vee u^+, u^+]$$ 
$$\text{and} \quad [(\phi(u_0) \wedge u^+\vert_{t=0}) \vee u^-\vert_{t=0}, u^-,u^+]$$ have the same zero level set as $u$.
\label{invcr}
\end{prop}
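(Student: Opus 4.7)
My plan combines a geometric invariance for the obstacle problem with the comparison Lemma~\ref{compob}.

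\emph{Invariance lemma (Step 1).} The key lemma to establish first is: for every continuous nondecreasing $\psi : [-N,N] \to \R$ with $\{\psi = 0\} = \{0\}$, one has $\psi \circ u = [\psi(u_0), \psi(u^-), \psi(u^+)]$. By uniqueness (Proposition~\ref{compple}) it suffices to check that $\psi \circ u$ is a viscosity sub/supersolution of the transformed problem. Assume first $\psi$ smooth and strictly increasing: if $\varphi$ tests $\psi \circ u$ from above at $(\hat x,\hat t)$ with $\psi(u)(\hat x,\hat t) > \psi(u^-)(\hat x,\hat t)$, then $u(\hat x,\hat t) > u^-(\hat x,\hat t)$ and $\tilde\varphi := \psi^{-1} \circ \varphi$ tests $u$ from above at the same point; the subsolution inequality for $u$, combined with the chain rule and the geometric property (ii) of $F$ applied with $\lambda = \psi'(u) > 0$ and $\sigma = \psi''(u)$, yields the one for $\psi \circ u$. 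A general continuous nondecreasing $\psi$ with $\{\psi = 0\} = \{0\}$ is then handled by uniform approximation by smooth strictly increasing $\psi_n$ with $\{\psi_n = 0\} = \{0\}$, invoking stability of viscosity solutions and uniqueness to identify the limit. As a corollary, $[\psi(u_0), \psi(u^-), \psi(u^+)]$ has the same zero level set as $u$.

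\emph{Sandwich argument (Step 2).} Set $\phi_- := \phi \wedge \operatorname{id}$ and $\phi_+ := \phi \vee \operatorname{id}$; both are continuous nondecreasing with $\{\phi_\pm = 0\} = \{0\}$ and satisfy $\phi_- \ls \operatorname{id} \ls \phi_+$ and $\phi_- \ls \phi \ls \phi_+$. Consider for concreteness case~1 of the proposition and denote by $w_1$ the resulting solution; by monotonicity of $\wedge$ and $\vee$, the solutions $w_1^\pm$ obtained by replacing $\phi$ by $\phi_\pm$ satisfy $w_1^- \ls w_1 \ls w_1^+$ (Lemma~\ref{compob}). For $w_1^+$: since $\phi_+ \gs \operatorname{id}$, the modified upper obstacle reduces to $\phi_+(u^+)$ and the modified initial datum reduces to $u_0$, so two further applications of Lemma~\ref{compob} yield $u \ls w_1^+ \ls \phi_+ \circ u$. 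A symmetric argument gives $\phi_- \circ u \ls w_1^- \ls u$. Combining, $\phi_- \circ u \ls w_1 \ls \phi_+ \circ u$; since Step~1 provides that $\phi_\pm \circ u$ share the zero level set of $u$, so does $w_1$. The other two cases are handled by the analogous argument with the min/max operations rerouted so that one modifies the lower obstacle or the initial datum instead of the upper obstacle.

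\emph{Main obstacle.} The one substantive difficulty lies in Step~1 when $\psi$ has flat segments: the obstacle condition $\psi(u) > \psi(u^-)$ no longer implies $u > u^-$, so the test function argument cannot be run literally. The fix is the approximation by strictly increasing $\psi_n$ together with the stability and compactness arguments already deployed in Section~3.2 (in particular Proposition~\ref{regularsol}). Once the invariance is in place, the sandwich of Step~2 is essentially a bookkeeping exercise with Lemma~\ref{compob} in each of the three cases.
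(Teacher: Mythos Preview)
Your proposal is correct and aligns closely with the paper's proof: both rely on the invariance $\psi\circ u = [\psi(u_0),\psi(u^-),\psi(u^+)]$ (which the paper simply cites as ``classical invariance'' without elaborating on the flat-segment issue you flag), and both then sandwich the modified solution between $u$ and $\phi(u)$ in the two ordered cases $\phi\gs\operatorname{id}$ and $\phi\ls\operatorname{id}$ via Lemma~\ref{compob}. The only genuine difference is in the reduction of a general $\phi$ to these ordered cases: the paper writes $\phi = f\circ g$ with $f=\phi\wedge\operatorname{id}$, $g=\phi\vee\operatorname{id}$ and iterates the result (first pass with $g$ on $u$, then with $f$ on $u_g$, using $(u_g)_f=u_{f\circ g}=u_\phi$), whereas you sandwich $w_1$ directly between the solutions $w_1^-$ and $w_1^+$ obtained by replacing $\phi$ with $\phi_\pm$. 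Your route is marginally more direct since it avoids checking the composition identity $(u_g)_f=u_\phi$; the paper's route is slightly more modular in that each step is literally an instance of the already-proved ordered case. Both are short once the invariance is in hand.
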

\begin{proof}

We will prove that $$u_\phi=[u_0 \wedge (\phi(u^+) \vee u^-) \vert_{t=0},u^-,\phi(u^+)\vee u^-]$$ has the same zero set as $u$. All the other equalities can be prove with a similar strategy.

We begin the proof assuming $\phi(x) \gs x.$ Then, $u_\phi = [u_0,u^-,\phi(u^+)].$\\
First, let us notice that the classical invariance for geometric equations proves immediately that $\phi(u)$ is the solution $[\phi(u_0),\phi(u^-),\phi(u^+)].$ In addition, thanks to Lemma \ref{compob} $u_\phi \gs u$ and $u_\phi \ls \phi(u).$ As a result, since $\{\phi(u) = 0\} = \{u=0\}$, we conclude that $\{u=0\} = \{u_\phi = 0\}$, what was expected.

Assume now that $\phi(x) \ls x$. The same arguments shows that $\phi(u) \ls u_\phi \ls u$, which leads to the same conclusion.

To conclude the proof for a general $\phi$, just introduce $f(x)=\min(x,\phi(x))$ and $g(x) = \max(x,\phi(x))$ and notice that since $\phi$ is nondecreasing, $\phi = f \circ g.$ So,
$$\{u=0\} = \{u_f = 0\} = \{(u_g)_f = 0\} = \{u_{f\circ g} = 0 \} = \{u_\phi = 0\}.$$
\end{proof}

Now, to be able to define a real geometrical evolution, we want a more general independence, which is contained in the following
\begin{thm}
Let $u = [u_0,u^-,u^+]$. Then, $\{u=0\} = \{v=0\}$ with $v=[v_0,v^-,v^+]$ under the (only) assumptions that 
$$\{u_0=0\} = \{v_0=0\},\quad \{u^- = 0\} = \{v^-=0\} \quad \text{and} \quad \{u^+ = 0\} = \{v^+=0\}.$$
\label{zeroset}
\end{thm}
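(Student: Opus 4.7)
My plan is to prove the harder inclusion $\{u=0\}\subset \{v=0\}$; the reverse then follows by swapping the roles of $u$ and $v$. The idea is to squeeze $v$ between two solutions of obstacle problems whose zero level sets coincide with that of $u$. More precisely, I aim to construct solutions $\bar u$ and $\tilde u$ of obstacle problems of the form \eqref{vsol} with modified data, such that $\{\bar u = 0\}=\{\tilde u = 0\}=\{u=0\}$ and
\[
\bar u \ls v \ls \tilde u \quad \text{on } \Rn \times[0,T].
\]
At any $(x,t)$ with $u(x,t)=0$, this sandwich yields $0=\bar u(x,t)\ls v(x,t)\ls \tilde u(x,t) = 0$, forcing $v(x,t)=0$.

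The main ingredient for the construction is the classical Evans--Spruck / Chen--Giga--Goto lemma: given two bounded uniformly continuous functions $f,g$ sharing the same (signed) zero level set, there exists a continuous nondecreasing $\phi:\R \to \R$ with $\{\phi=0\}=\{0\}$ and $\phi(f)\gs g$ pointwise. Replacing $\phi$ by $\phi \vee \mathrm{id}$ (respectively $\phi\wedge \mathrm{id}$) preserves these properties while further ensuring $\phi(s)\gs s$ (resp.\ $\phi(s)\ls s$) for all $s\in \R$.

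To build $\tilde u$ I would apply Proposition \ref{invcr} three times to $u$, always with $\phi\gs \mathrm{id}$, in the order: first the transformation acting on $u^+$ with $\phi_+$ such that $\phi_+(u^+)\gs v^+$; then the one acting on $u^-$ with $\phi_-$ such that $\phi_-(u^-)\gs v^-$; and finally the one acting on the current initial datum with $\phi_0$ such that $\phi_0$ of that datum dominates $v_0$. The choice $\phi\gs \mathrm{id}$ is precisely what makes the $\vee u^-$ and $\wedge u^+$ operations appearing in Proposition \ref{invcr} disappear on the factor being modified, and it also leaves the initial datum unchanged during the first two steps. An induction using $u^-\ls u\ls u^+$ together with the sign-preserving character of each $\phi$ shows that the intermediate initial datum has the same zero set (and, by continuity, the same sign) as $u_0$ at every step, so the classical lemma can indeed be reapplied to produce $\phi_0$. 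At the end, the transformed triple dominates $(v_0,v^-,v^+)$ pointwise and Lemma \ref{compob} yields $\tilde u \gs v$. The construction of $\bar u$ is completely symmetric: one picks $\phi$'s with $\phi \ls \mathrm{id}$ and applies the three transformations in the reversed order (T2 on $u^-$, then T3 on the initial datum, then T1 on $u^+$), which is the order that now collapses the spurious $\vee$'s and $\wedge$'s and produces data pointwise below those of $v$, so that Lemma \ref{compob} gives $\bar u \ls v$.

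The delicate point of the whole argument, and the reason the three steps cannot be applied in an arbitrary order, is precisely that Proposition \ref{invcr} does not allow us to replace $u^\pm$ by an arbitrary function with the same zero set: it only allows $\phi(u^+)\vee u^-$ or $\phi(u^-)\wedge u^+$. Arranging both the monotonicity of each $\phi$ against the identity and the order of the three transformations so that the parasitic $\vee$ and $\wedge$ operations are always harmless is where most of the bookkeeping lies, but once the order is chosen correctly no additional idea seems to be needed.
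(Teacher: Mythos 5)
Your argument is correct and relies on the same three ingredients as the paper --- the Evans--Spruck reparametrization lemma, Proposition \ref{invcr} and the comparison Lemma \ref{compob} --- but it assembles them differently. The paper changes one datum at a time, passing through the intermediate solutions $[u_0,u^-,v^+]$ and $[u_0,v^-,v^+]$ and proving equality of zero sets at each link of the chain; each link is handled by a one-sided sandwich ($u\ls v\ls u_\phi$ together with $\{u=0\}=\{u_\phi=0\}$), with the auxiliary obstacle $u^+\wedge v^+$ used to remove the ordering hypothesis between $u^+$ and $v^+$. You instead compose the three transformations of Proposition \ref{invcr} into a single upper barrier $\tilde u$ and a single lower barrier $\bar u$, both with the same zero set as $u$, and squeeze $v$ between them; this avoids the intermediate problems and the $u^+\wedge v^+$ trick, at the price of the order-of-operations bookkeeping you describe. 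Two small remarks. First, your claim that choosing $\phi\gs\mathrm{id}$ makes the parasitic operation disappear ``on the factor being modified'' is not literally true for the second transformation: $\phi_-(u^-)$ may exceed the (already modified) upper obstacle, so the $\wedge$ survives. This is harmless, since all your comparison step needs is $\phi_-(u^-)\wedge\phi_+(u^+)\gs v^-$, which holds because both terms dominate $v^-$; but the justification should be phrased that way rather than via cancellation. Second, both your proof and the paper's implicitly require the zero level sets to be understood with sign (i.e.\ $\{u_0\ls 0\}=\{v_0\ls 0\}$, etc.), without which the reparametrization lemma fails; you flag this correctly with the word ``signed''.
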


\begin{proof}
This proof is based on the independence with no obstacles which is proved in \cite{evans91}, Theorem 5.1. We assume first that $u^-=v^-$ and $u^+=v^+$. As in \cite{evans91}, we define
$$\forall k \in \mathbb Z \setminus \{0\}, \quad E_k=\left\{x \in \Rn \, \middle \vert \, u_0(x) \gs \frac{1}{k} \right \}$$
and
$$a_k = \max_{\Rn \setminus E_k} v_0.$$
It is easy to see that
$$\forall k >0, \; a_1 \gs a_2 \gs \cdots \to 0 \quad \text{and} \quad a_{-1} \ls a_{-2} \ls \cdots \to 0.$$
Let us introduce $\phi : [-N,N] \to [-N,N]$ (with $N \gs \Vert u^\pm \Vert_\infty$, piecewise affine, by
$$\phi(\pm N) = \pm N,\quad \phi\left( \frac 1k \right) = a_k \quad \text{and} \quad \phi(0)=0.$$
Then, by definition, $\phi(u_0) \gs v_0$, $\{\phi = 0\} = \{0\}$ and $\phi$ is nondecreasing continuous. Thanks to Proposition \ref{invcr}, the solution $u_\phi := [\phi(u_0) \wedge u^+,u^-,u^+]$ has the same zero level-set as $u$, and is bigger than $v$ by comparison principle. Hence 
$$\{v \gs 0\} \subset \{u_\phi \gs 0\} = \{u\gs 0\}.$$

We prove the reverse inclusion switching $u_0$ and $v_0$.

Now, we assume that $u_0 = v_0$, $u^- = v^-$ and $u^+ \ls v^+.$ Then, by Lemma \ref{compob}, $u \ls v.$ We have just seen that there exists $\phi : [-N,N] \to [-N,N]$ nondecreasing continuous such that $\phi(u^+) \gs v^+$ and $\{\phi = 0 \} = \{0\}.$ Let $u_\phi=[u_0,u^-,\phi(u^+) \vee u^-].$ We saw that $u_\phi$ has the same zero set as $u$. In addition, by comparison, $u_\phi \gs v$. As a matter of fact,
$$\{u=0\} = \{ v=0\} = \{u_\phi = 0 \}.$$

If we drop the assumption $u^+ \ls v^+$, notice that $[u_0,u^-,u^+]$ and $[u_0,u^-,u^+ \wedge v^+]$ have the same zero level-set, so do $[u_0,u^-,v^+]$ and $[u_0,u^-,u^+\wedge v^+]$. Hence $[u_0,u^-,u^+]$ and $[u_0,u^-,v^+]$ have the same zero level-set.

Of course, changing only $u^-$ leads to the same result.

To show the general case, juste note that that $[u_0,u^-,u^+]$ and $[u_0,u^-,v^+]$ have the same zero level-set, so do $[u_0,u^-,v^+]$ and $[u_0,v^-,v^+]$, and $[u_0,v^-,v^+]$ and $[v_0,v^-,v^+]$  , and the first and the last ones.
\end{proof}
\subsection{Obstacles create fattening}
Although the fattening phenomenon may already occur without any obstacle (see \cite{bellettini98} for examples and \cite{barles93,biton08} for a more general discussion), obstacles will easily generate fattening whereas the free evolution is smooth. Consider $A$ a set of three points in $\R^2$ spanning an equilateral triangle and $S$ a circle enclosing it, centered on the triangle's center. Let $u^- = -1$, $u^+ = dist(\cdot,A)$, $u_0 = dist(\cdot, S)$ and $F(Du,D^2u) = - |D u| \div\left( \frac{D u}{|D u |} \right).$

It is possible to show (see next section) that the level sets $\{u(\cdot,t) \ls \alpha\}$ are minimizing hulls, hence are convex. So, the level set $\{u \ls 0\}$ contains the equilateral triangle. On the other hand, the level sets $\{u \ls -\delta\}$ behave as if there were no obstacles at all (in Proposition \ref{zeroset}, one can take $u^+ \equiv 1$ which has the same $-\delta$-set as $d(\cdot, A)$), so they disappear in finite time. As a result, $u=0$ in the whole triangle, and $\{u=0\}$ develops non empty interior.

\section{Comparison with a variational discrete scheme and long-time behavior}
\label{longtime}
In this section, we study the behavior of the mean curvature flow only\footnote{That means $u_t = |D u| \div \left( \frac{D u}{\vert D u \vert} \right)$.} with no forcing term and time independent obstacles, in large times. We assume moreover that $\Omega^+ = \R^n$ so that the obstacle is only from inside. For simplicity, we write $\Omega$ instead of $\Omega^-.$ In particular, we show that for relevant initial conditions ($E_0$ is assumed to be a \emph{minimizing hull}, see Definition \ref{defminh}), the flow has a limit.

In order to get some monotonicity properties of the flow, we will link our approach to a variational discrete flow built in \cite{spadaro11} and \cite{chambolle12} and inspired by \cite{ATW}. Starting from a set $E$ and an obstacle $\Omega \subset E$, these two papers introduce the following energy
\begin{equation} \mathcal E_h(E) = \min_{F \supset \Omega} \per(F) + \frac 1h \int_{F} d_{E}. \label{eq:energy}\end{equation}
In the previous energy, $\per (E)$ denotes the perimeter of the finite perimeter set $E$ (see \cite{giusti84} for an introduction to finite perimeter sets) and $d_E$ is the signed distance function to the set $E$ (positive outside $E$, negative inside).
\begin{rem}
Note that Spadaro introduces the energy
$$ \tilde {\mathcal E}_h(E) := \min_{{\Omega \subset F}} \left[ \operatorname{Per}(F) + \frac{1}{h} \int_{F \Delta E} \operatorname{dist}(x,\partial E) dx \right]. $$
One can see that it provides the same minimizers as \eqref{eq:energy} (not the same minimum, though). Indeed, one can write
$$ \operatorname{Per}(F) + \frac{1}{h} \int_{F \Delta E} \operatorname{dist}(x,\partial E) dx = \operatorname{Per}(F) + \frac{1}{h} \int_{F \setminus E} \operatorname{dist}(x,\partial E) dx + \frac{1}{h} \int_{E \setminus F} \operatorname{dist}(x,\partial E)$$
whereas
$$ \per(F) + \frac 1h \int_{F} d_{E} = \per (F) +\frac 1h \int_{F \setminus E} \operatorname{dist}(x,\partial E) dx - \frac 1h \int_{F \cap E} \operatorname{dist}(x,\partial E) dx.$$
Then, we can realize that the difference between the two energies is
  $$\frac{1}{h} \int_{E \setminus F} \operatorname{dist}(x,\partial E) + \frac 1h \int_{F \cap E} \operatorname{dist}(x,\partial E) dx =  \frac 1h \int_E \operatorname{dist}(x,\partial E) dx$$
which does not depend on $F$. Therefore, the two energies have the same minimizers.
\end{rem}
It has to be noticed that minimizers of these energies are not unique. 
\vspace{0.3cm}
To establish the comparison between these two approaches, we introduce
\begin{itemize}
 \item $u_0 : \R^n \to [-1,1]$ a uniformly continuous function such that $\{u_0 \ls 0\} = E_0$ (we make more assumptions later)
 \item $u^+ : \R^n \to [-1,1]$ a uniformly continuous function such that $\{u^+ \ls 0 \} = \Omega$ and $u^+ \gs u_0$. 
 \item $u^- = -1$.
\end{itemize}

In what follows, we will be interested in the 0-level-set of the solution u to 
$$u_t = |Du| \div \left( \frac{Du}{\vert Du \vert} \right)$$
with obstacles $u^\pm$ and initial condition $u_0$. More precisely, we want to show that for suitable $E_0$, the 0-level-set of the solution $\{u = 0\}$ converges to a minimal surface with obstacles.\\
We recall that thanks to Theorem \ref{zeroset}, any choice of $u_0$, $u^+$ satisfying the assumptions above will lead to the same evolution of the zero level-set of the solution.

\subsection{The discrete flow for sets}
Following \cite{spadaro11}, we define
\begin{defi}
$E$ is said to be a minimizing hull if $| \partial E| = 0$ (this is not assumed in the definition in \cite{spadaro11}, but is assumed stating minimizing hull properties) and
$$\per(E) \ls \per(F), \quad \forall F \supset E \; \text{with} \; F\setminus E \text{ compact}.$$
\label{defminh}
\end{defi}
Spadaro then shows the
\begin{prop}
Let $E$ be a minimizing hull. Then
\begin{itemize}
 \item For every $h>0$, one can define a (unique) maximal (with respect to $\subset$) minimizer in \eqref{eq:energy}, denoted in what follows by $T_h(E)$ (for every other minimizer $F$ of \eqref{eq:energy}, one has $F \subset T_h(E)$),
 \item $T_h(E) \subset E$ and $T_h(E)$ is still a minimizing hull (the measure of the boundary remains zero thanks to the classical regularity of minimizers (see for example Appendix B in \cite{spadaro11})
 \item If $F$ is another minimizing hull and $F \subset E$, then $T_h(F) \subset T_h(E).$
\end{itemize}
\end{prop}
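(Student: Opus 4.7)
The proposition is essentially due to Spadaro \cite{spadaro11} and relies on standard techniques from the theory of Almgren--Taylor--Wang-type functionals. I would begin with existence of a minimizer via the direct method: the admissible class $\{F : \Omega \subset F\}$ is non-empty (it contains $E$), any minimizing sequence has uniformly bounded perimeter and is confined near $E$ by the distance penalization, so by the compactness theorem for sets of finite perimeter one extracts an $L^1_{\text{loc}}$-convergent subsequence whose limit $F^\ast \supset \Omega$ is a minimizer by lower semicontinuity of the perimeter and dominated convergence on the distance term.

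The existence of a \emph{maximal} minimizer follows from the submodularity of the perimeter,
\begin{equation*}
\per(F_1\cup F_2) + \per(F_1\cap F_2) \ls \per(F_1) + \per(F_2),
\end{equation*}
combined with the additivity $\int_{(F_1\cup F_2)\Delta E} d_E + \int_{(F_1\cap F_2)\Delta E} d_E = \int_{F_1\Delta E} d_E + \int_{F_2\Delta E} d_E$. These jointly imply that the class of minimizers is closed under unions, so a monotone-limit argument using $L^1_{\text{loc}}$-closedness yields a unique maximal minimizer, which is declared to be $T_h(E)$. The inclusion $T_h(E)\subset E$ then follows by comparing $T_h(E)$ to $T_h(E)\cap E$: applying the minimizing hull property of $E$ to $T_h(E)\cup E \supset E$ gives $\per(E)\ls \per(T_h(E)\cup E)$, which via submodularity yields $\per(T_h(E)\cap E)\ls \per(T_h(E))$, while the positivity of $d_E$ on $T_h(E)\setminus E$ makes the distance term strictly decrease unless $T_h(E)\subset E$; minimality then forces the inclusion.

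For the minimizing hull property of $T_h(E)$, take $G\supset T_h(E)$ with $G\setminus T_h(E)$ compact. Submodularity applied to $(G,E)$ together with $\per(G\cup E)\gs \per(E)$ reduces the problem to the case $G\subset E$. Then maximality becomes essential: if $\per(G)<\per(T_h(E))$, testing minimality of $T_h(E)$ against $G$ combined with the sign of $d_E$ on $G\setminus T_h(E)\subset E$ would produce a minimizer of \eqref{mindisc} strictly larger than $T_h(E)$, contradicting maximality. This is the main obstacle and requires the careful regularity and excision arguments detailed in Appendix B of \cite{spadaro11}.

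Finally, for the monotonicity $T_h(F)\subset T_h(E)$ with $F\subset E$ both minimizing hulls, set $A = T_h(F)$ and $B = T_h(E)$, and sum the minimality inequalities obtained by testing $A$ against $A\cap B$ in the $F$-problem and $B$ against $A\cup B$ in the $E$-problem (both competitors are admissible). Using perimeter submodularity and the distance identity, one obtains
\begin{equation*}
0\ls \per(A)+\per(B)-\per(A\cap B)-\per(A\cup B) \ls \frac{1}{h}\int_{A\setminus B}(d_F - d_E),
\end{equation*}
where $d_F\gs d_E$ on $F\supset A\setminus B$ since $F\subset E$. If $A\setminus B$ had positive measure, a refined analysis of each minimality inequality (using once again maximality of $B$) would force $A\cup B$ to be another minimizer of the $E$-problem, hence $A\cup B\subset B$ and thus $A\subset B$, as desired.
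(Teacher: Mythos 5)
The paper itself offers no proof of this proposition: it is quoted from Spadaro \cite{spadaro11} (``He then shows the\dots''). Your overall strategy --- direct method, submodularity of the perimeter plus additivity of the distance term to get a maximal minimizer, and summed minimality inequalities for the comparison statements --- is exactly the technique the paper uses for the neighbouring propositions, so the approach is right. One remark before the main issue: your treatment of the minimizing hull property of $T_h(E)$ is needlessly indirect. Once you have reduced to a competitor $G$ with $T_h(E)\subset G\subset E$, simply test the minimality of $T_h(E)$ against $G$: since $G\Delta E=E\setminus G\subset E\setminus T_h(E)$, the distance term can only decrease, and $\per(T_h(E))\ls\per(G)$ follows at once, with no appeal to maximality or to ``regularity and excision arguments''. (Regularity is only needed for the parenthetical claim $|\partial T_h(E)|=0$, which you do not address, and which is also what lets one upgrade ``the distance term vanishes on $T_h(E)\setminus E$'' to ``$|T_h(E)\setminus E|=0$'' in your second bullet, since $d_E=0$ on $\partial E$.)

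The genuine gap is in the last step, which is precisely where the conclusion must come from: your final display has the wrong sign and, as written, is vacuous. Writing the functional as $\per(\cdot)+\frac1h\int_{\cdot}d_E$ (equivalent up to an additive constant), testing $A=T_h(F)$ against $A\cap B$ in the $F$-problem and $B=T_h(E)$ against $A\cup B$ in the $E$-problem and summing gives
$$0\ls \per(A)+\per(B)-\per(A\cap B)-\per(A\cup B)\ls\frac1h\int_{A\setminus B}\left(d_E-d_F\right),$$
and $F\subset E$ implies $d_E\ls d_F$, so the right-hand side is $\ls 0$. Hence every inequality in the chain is an equality; in particular $B$ and $A\cup B$ have the same energy in the $E$-problem, so $A\cup B$ is a minimizer of it, and maximality of $B=T_h(E)$ yields $A\cup B\subset B$, i.e.\ $A\subset B$. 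With the integrand $d_F-d_E\gs 0$ as you wrote it, the chain $0\ls\cdots\ls(\text{nonnegative})$ carries no information, and the ``refined analysis'' you allude to is exactly the missing argument; the fix is the sign correction above, after which maximality of $T_h(E)$ (not further regularity) closes the proof.
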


Then, he defines the following scheme
\begin{equation} E_h(t) := T_h^{\left \lfloor{t/h}\right \rfloor }(E_0). \label{mindisc} \end{equation}

Let us state a couple of properties of the flow which will allow us to pass to the limit in $h$.

\begin{prop}
 Let $E$ be a minimizing hull and $h > \tilde h$. Then, $T_h(E) \subset T_{\tilde h} (E)$ almost everywhere.
\end{prop}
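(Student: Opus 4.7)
My plan is the classical meet/join comparison. Set $A := T_h(E)$ and $B := T_{\tilde h}(E)$; by the previous proposition, both contain $\Omega$ and are contained in $E$, so $A \cap B$ and $A \cup B$ are admissible competitors in both variational problems. The strategy is to confront the minimality of $A$ for $J_h$ against $A \cap B$, and the minimality of $B$ for $J_{\tilde h}$ against $A \cup B$, then combine the resulting inequalities with the submodularity of the perimeter.

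Using the disjoint decompositions
$$E \setminus (A \cap B) = (E \setminus A) \sqcup (A \setminus B), \qquad E \setminus B = (A \setminus B) \sqcup (E \setminus (A \cup B)),$$
both valid since $A, B \subset E$, to simplify the symmetric-difference integrals, the two minimality inequalities $J_h(A) \ls J_h(A \cap B)$ and $J_{\tilde h}(B) \ls J_{\tilde h}(A \cup B)$ rearrange as
$$\per(A) - \per(A \cap B) \ls \tfrac{1}{h} \int_{A \setminus B} d_E, \qquad \per(B) - \per(A \cup B) \ls -\tfrac{1}{\tilde h} \int_{A \setminus B} d_E.$$

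Adding these and invoking the submodularity of the perimeter $\per(A \cup B) + \per(A \cap B) \ls \per(A) + \per(B)$, I obtain the chain
$$0 \ls \per(A) + \per(B) - \per(A \cap B) - \per(A \cup B) \ls \left( \tfrac{1}{h} - \tfrac{1}{\tilde h} \right) \int_{A \setminus B} d_E.$$
Since $h > \tilde h$ makes the prefactor $\tfrac{1}{h} - \tfrac{1}{\tilde h}$ strictly negative, while $d_E$ has a definite sign on $A \setminus B \subset E$, this chain can only be consistent if $\int_{A \setminus B} d_E = 0$. Because $d_E$ vanishes only on $\partial E$ and $|\partial E| = 0$ by the minimizing hull hypothesis, this forces $|A \setminus B| = 0$, i.e.\ $T_h(E) \subset T_{\tilde h}(E)$ almost everywhere.

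The most delicate step I anticipate is the careful sign bookkeeping in the two distance integrals when rearranging the minimality inequalities (so that the prefactor and the sign of $d_E|_E$ combine to pin down the integral); once past that, the argument is the standard union/intersection competitor trick combined with perimeter submodularity, a well-known device for proving monotonicity in the time-step parameter of Almgren--Taylor--Wang-type schemes.
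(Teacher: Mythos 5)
Your competitor choice (intersection tested against the $h$-problem, union against the $\tilde h$-problem) combined with submodularity of the perimeter is exactly the paper's argument, but there is a genuine gap in the sign of the bulk term, and it breaks the final step. You evaluated the fidelity term literally as $\int_{F\Delta E} d_E$ with the \emph{signed} distance, so for a competitor $F\subset E$ it equals $\int_{E\setminus F} d_E = -\int_{E\setminus F}|d_E|$. The functional the paper actually uses in its proofs (and the only one for which the scheme is meaningful, cf.\ Spadaro's weight $\operatorname{dist}(x,\partial E)=|d_E|$ in the Remark) is, up to an additive constant independent of $F$, $\per F + \frac1h\int_F d_E$; with the signed weight over the symmetric difference the bulk term has the opposite sign and actually rewards shrinking $F$. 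Your two rearranged inequalities are correct algebra for the functional you chose, but then the conclusion fails: you reach $0\ls\bigl(\frac1h-\frac1{\tilde h}\bigr)\int_{A\setminus B} d_E$, i.e.\ $\int_{A\setminus B} d_E\ls 0$, which is \emph{automatically} true since $d_E\ls 0$ on $A\setminus B\subset E$. So the chain is perfectly consistent with $\int_{A\setminus B}d_E<0$, and the assertion that consistency forces the integral to vanish is false; nothing is pinned down.

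With the correct bulk term $\frac1h\int_F d_E$ the right-hand sides of your two displayed inequalities change sign, and summing with submodularity gives $0\ls\bigl(\frac1{\tilde h}-\frac1h\bigr)\int_{A\setminus B} d_E$ with a \emph{positive} prefactor, hence $\int_{A\setminus B} d_E\gs 0$. This is the inequality that genuinely fights against $d_E\ls 0$ on $A\setminus B\subset E$: it forces $d_E=0$ a.e.\ on $A\setminus B$, so $A\setminus B$ is contained in $\partial E$ up to a null set, and $|\partial E|=0$ from the minimizing hull hypothesis concludes. In short, the route (meet/join competitors plus submodularity, which is the paper's route) is right, but you must fix the sign convention of the dissipation term before the ``definite sign'' argument at the end can close.
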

\begin{proof}
 Indeed, Let $F:= T_h(E)$ and $\tilde F := T_{\tilde h} (E)$. Since $E$ is a minimizing hull, $F,\tilde F \subset E$ so $d_E \ls 0$ on $F \cup \tilde F$. Using the very definition of $F$ and $\tilde F$, one can write
$$\per(F\cap \tilde F) + \frac{1}{h} \int_{F\cap \tilde F} d_E \gs \per (F) + \frac 1h \int_F d_E$$
$$\per(F \cup \tilde F) + \frac{1}{\tilde h} \int_{F\cup \tilde F} d_E \gs \per \tilde F + \frac{1}{\tilde h} \int_{\tilde F} d_E.$$

Summing, we get 
$$\per(F\cap \tilde F) + \per(F \cup \tilde F) + \frac{1}{h} \int_{F \cap \tilde F} d_E + \frac{1}{\tilde h} \int_{F \cup \tilde F} d_E \gs \per (F) + \per \tilde F + \frac{1}{h} \int_F d_E + \frac{1}{\tilde h} \int_{\tilde F} d_E.$$
Since $\per (F \cap \tilde F) + \per (F \cup \tilde F) \ls \per (F) + \per \tilde F$, one has
$$\frac 1h \int_{F \cap \tilde F} d_E + \frac{1}{\tilde h} \int_{F\cup \tilde F} d_E \gs \frac 1h \int_F d_E + \frac{1}{\tilde h} \int_{\tilde F} d_E,$$
which means
$$\frac{1}{\tilde h} \int_{F \setminus \tilde F} d_E \gs \frac{1}{h} \int_{F \setminus \tilde F} d_E,$$
hence
$$\int_{F\setminus \tilde F} d_E \left( \frac{1}{\tilde h} - \frac{1}{h} \right) \gs 0.$$
Then, since $|\partial E| = 0$, $|F \setminus \tilde F|$ = 0. 
\end{proof}

To pass to the limit in $h$, we will want to control the ``motion speed'' (see Proposition \ref{apsoluc}). To do so, we will need the two following propositions. First, we compare the constrained and the free motions.
\begin{prop}
Let $E$ be a minimizing hull containing $\Omega$. Let $E^f$ be the free evolution of $E$ ($E^f = T_h(E)$ with $\Omega = \emptyset$) and $E^c$ the regular evolution ($E^c$ is the maximal minimizer of \eqref{eq:energy}). Then, $E^f \cup \Omega \subset E^c$.
\label{compfree}
\end{prop}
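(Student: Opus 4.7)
The plan is a classical competitor argument based on the submodularity of the perimeter. Write $F_1 := E^c \cup E^f$ and $F_2 := E^c \cap E^f$. First I would observe that $E^c$ and $E^f$ are both contained in $E$: indeed $E$ is a minimizing hull, so by the proposition stated just above, $T_h(E) \subset E$ regardless of whether one imposes the constraint $\Omega \subset F$ or not. In particular $d_E \ls 0$ on $E^c$ and on $E^f$, and all four sets $E^c, E^f, F_1, F_2$ are subsets of $E$, so every symmetric difference with $E$ below is of the form $E \setminus (\cdot)$.

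Next I would test the two minimality inequalities. The set $F_1$ contains $\Omega$ (because $E^c$ does), so $F_1$ is admissible in the constrained problem defining $E^c$:
\[
\per(E^c) + \frac 1h \int_{E \setminus E^c} d_E \ls \per(F_1) + \frac 1h \int_{E \setminus F_1} d_E.
\]
The set $F_2$ is admissible in the free problem, so
\[
\per(E^f) + \frac 1h \int_{E \setminus E^f} d_E \ls \per(F_2) + \frac 1h \int_{E \setminus F_2} d_E.
\]

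The key elementary identity is that for any measurable $f$,
\[
\int_{E \setminus E^c} f + \int_{E \setminus E^f} f = \int_{(E \setminus E^c)\cup (E \setminus E^f)} f + \int_{(E \setminus E^c)\cap(E\setminus E^f)} f = \int_{E\setminus F_2} f + \int_{E \setminus F_1} f,
\]
so summing the two displayed inequalities the distance terms cancel and only
\[
\per(E^c) + \per(E^f) \ls \per(F_1) + \per(F_2)
\]
remains. But submodularity of the perimeter gives the reverse inequality $\per(F_1) + \per(F_2) \ls \per(E^c) + \per(E^f)$, so equality must hold throughout. In particular, the first inequality above is an equality, which means $F_1 = E^c \cup E^f$ is itself a minimizer of the constrained problem.

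Finally, since $E^c$ is by assumption the \emph{maximal} minimizer of that problem, $F_1 \subset E^c$, hence $E^f \subset E^c$. Combined with $\Omega \subset E^c$ this yields $E^f \cup \Omega \subset E^c$, as desired. The only delicate point is the cancellation of the distance integrals, which works because $E^c$ and $E^f$ both lie in $E$ so the symmetric differences reduce cleanly to complements inside $E$; without that containment the argument would fail.
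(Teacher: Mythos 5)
Your proof is correct and follows essentially the same route as the paper's: test $E^c\cup E^f$ as a competitor in the constrained problem and $E^c\cap E^f$ in the free one, cancel the bulk terms by the union/intersection identity, invoke submodularity of the perimeter to force equality everywhere, and conclude from the maximality of $E^c$. The only cosmetic difference is that you write the bulk term as an integral over $E\setminus F$ rather than over $F$ itself; since $E^c,E^f\subset E$ these differ by a constant in $F$, so the cancellation is the same.
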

\begin{proof}
 Using the definition of $E^f$ and $E^c$, one can write
\begin{equation}\per(E^f \cap E^c) + \int_{E^f \cap E^c} \frac{d_{E}}{h} \gs \per(E^f) + \int_{E^f} \frac{d_{E}}{h} \label{eq:inter} \end{equation}
\begin{equation} \per(E^f \cup E^c) + \int_{E^f \cup E^c} \frac{d_{E}}{h} \gs \per(E^c) + \int_{E^c} \frac{d_{E}}{h}. \label{eq:uni} \end{equation}

Summing and using $\per(E\cap F) + \per (E \cup F) \ls \per (E) + \per (F)$, we get
$$\int_{E^c \cap E^f} \frac{d_{E}}{h} + \int_{E^c \cup E^f} \frac{d_{E}}{h} \gs \int_{E^f} \frac{d_{E}}{h} + \int_{E^c} \frac{d_{E}}{h},$$
which is an equality. We conclude that \eqref{eq:inter} and \eqref{eq:uni} are equalities. In particular,
$$\per(E^f \cup E^c) + \int_{E^f \cup E^c} \frac{d_{E}}{h} = \per(E^c) + \int_{E^c} \frac{d_{E}}{h},$$which shows that $E^f \cup E^c$ is a minimizer of \eqref{eq:energy}. Since $E^c$ is the maximal minimizer, one has $E^f \subset E^c$.

One can also notice that by definition, $\Omega \subset T_h(E^c)$ so $T_h(E^f) \cup \Omega \subset T_h (E^c).$
\end{proof}

Then, it is easy to see that
\begin{itemize}
 \item A ball $B_R(x_0)$ is a minimizing hull,
\item For $h \ls \frac{R^2}{4n}$, we have $T_h(B_R(x_0)) = B_r(x_0)$ with $r = \frac{R + \sqrt{R^2 - 4nh}}2$.
\end{itemize}

Let us now show that $T_h$ preserves inclusion.
\begin{prop}
Let $\Omega^1 \subset \Omega^2$ be two obstacles and $E^1 \subset E^2$ be two minimizing hulls containing respectively $\Omega^1$ and $\Omega^2$. For $i \in \{1,2\}$, we introduce  
$$ E_h^i := \argmin_{E \supset \Omega^i} \per(E) + \frac{1}{h} \int_E d_{E^i},$$
where we choose $E_h^i$ to be maximal. Then, $E^1_h \subset E_h^2$.
\label{inclusion}
\end{prop}

\begin{proof}
Use the definition to write
\begin{equation}\per(E^1_h \cap E^2_h) + \int_{E^1_h \cap E^2_h} \frac{d_{E^1}}{h} \gs \per(E^1_h) + \int_{E^1_h} \frac{d_{E^1}}{h}, \label{eq:inter2} \end{equation}
\begin{equation} \per(E^1_h \cup E^2_h) + \int_{E^1_h \cup E^2_h} \frac{d_{E^2}}{h} \gs \per(E^2_h) + \int_{E^2_h} \frac{d_{E^2}}{h}, \label{eq:uni2} \end{equation}

Summing and simplifying, we get
$$\int_{E_h^1 \cap E^2_h} \frac{d_{E^1}}{h} + \int_{E_h^1 \cup E^2_h} \frac{d_{E^2}}{h} \gs \int_{E_1^h} \frac{d_{E^1}}{h} + \int_{E_2^h} \frac{d_{E^2}}{h}$$
which can be read
$$\int_{E^1_h \setminus E^2_h} \frac{d_{E^2}}{h} \gs \int_{E^1_h \setminus E^2_h} \frac{d_{E^1}}{h}$$ or again
$$\int_{E^1_h \setminus E^2_h} \frac{d_{E^1} - d_{E^2}}{h} \ls 0.$$

Since $E_1 \subset E_2$, one has $d_{E_2} \ls d_{E_1}$ which shows that the last inequality must in fact be an equality. As above, we conclude that showing as above that \eqref{eq:inter2} and \eqref{eq:uni2} are equalities, which proves that $E_h^1 \subset E_h^2.$
\end{proof}
Thanks to Propositions \ref{compfree} and \ref{inclusion}, one can conclude that the evolution $E_h$ of a minimizing hull $E_0$ contains the free evolution of every ball inside $E_0$.

\subsection{Passing to the limit}
Now, we want to define a similar iterative scheme but for the whole $u_0$. We assume that every level-set of $u_0$ is a minimizing hull ($E_0$ is assumed to be one and one can choose the other level sets of $u$ as we like to get this property).
\begin{rem}Starting from a minimizing hull $E_0$, it is easy to construct such a $u_0$. Let $\tilde u_0 $ be the signed distance function to $E_0$ truncated to $[-1,1]$. Let us define $u_0$ by replacing the level sets of $\tilde u_0$ $\tilde E_s := \{\tilde u_0 \ls s\}$ by $E_s$ the smallest (with respect to the inclusion) minimizer of $\per$ among the sets containing $\tilde E_s.$ \\
By definition, such sets must be minimizing hulls and the inclusion of the level sets is preserved so we can define $u_0$ by setting
$$\{ u_0 \ls s \} := E_s.$$

We now have to show that such a $u_0$ is continuous. If it were not, then there would exist $s<t$ and $x \in \overline{E_s^0} \cap \overline{(E_t^0)^c}$ (which reads formally $x \in \partial E_s^0 \cap E_t^0$). Since $\tilde u_0$ is continuous, the subset of such $x$ must be compact in $E_s \setminus \tilde E_s$ and $E_s \setminus \tilde E_t$. On the other hand the free boundaries $\partial E_\sigma \setminus \tilde E_\sigma$ for $\sigma \in [-1,1)$ have variational curvature zero (every small variation is admissible for the constraint $E_\sigma \supset \tilde E_\sigma$). We can then apply a cut and paste argument (see Th. 11 of \cite{Mer16} for a detailed proof) to show that this is not possible, and $u_0$ is therefore continuous.
\end{rem}
We define an evolution $u_h : \Rn \times [0,T[ \to [-1,1]$ by setting for all $s \in [-1,1]$, $E_s := \{u_0 \ls s\}$ and
$$\{u_h(t) \ls s\} = (E_s)_h (t).$$
This is well defined (in particular, $\{u_h(t) \ls s\} \subset \{u_h(t) \ls s'\}$ if $s \ls s'$) thanks to Proposition \ref{inclusion}.

One can easily notice that Proposition \ref{inclusion} gives the following monotonicity. If $u_0 \ls \tilde u_0$ are two functions whose level sets are minimizing hulls, $v \gs \tilde v$ two obstacle functions, then $u_h \ls \tilde u_h$.

Now, we want to pass to the limit in $h$ in the construction above. We will use the
\begin{prop}
If $u_0$ and $u^+$ are uniformly continuous (with modulus $\omega$), then the family $(u_h)$ is equicontinuous in space (with modulus $\omega$) and time.
\label{apsoluc}
\end{prop}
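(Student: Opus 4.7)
The plan is to establish spatial and temporal equicontinuity separately. In both cases I rely on translation invariance of the scheme \eqref{mindisc} together with the monotonicity of Proposition \ref{inclusion}; the time estimate additionally uses the explicit shrinking rule for balls recalled just before Proposition \ref{inclusion}.

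For the spatial estimate, both the perimeter and the signed distance $d_E$ commute with translations, so the scheme is translation equivariant: $T_h(E + z; \Omega + z) = T_h(E; \Omega) + z$. Fix $z \in \Rn$ with $|z| \le r$ and a level $s \in [-1,1]$. Since $u_0$ and $u^+$ are $\omega$-uniformly continuous,
$$\{u_0 \le s\} + z \subset \{u_0 \le s + \omega(r)\}, \qquad \{u^+ \le s\} + z \subset \{u^+ \le s + \omega(r)\}.$$
By translation equivariance, the first iterate of the scheme applied to the left-hand pair $(\{u_0 \le s\}+z,\{u^+ \le s\}+z)$ equals the first iterate applied to $(\{u_0 \le s\},\{u^+ \le s\})$, shifted by $z$. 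Proposition \ref{inclusion} applied at every step yields $(E_s)_h(t) + z \subset (E_{s+\omega(r)})_h(t)$, which reads $u_h(x+z, t) \le u_h(x, t) + \omega(|z|)$. Exchanging $x$ and $x+z$ yields the $\omega$-modulus in space, uniformly in $h$.

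For the time estimate, fix $x_0$, $t$, $\tau > 0$ and set $s_0 = u_h(x_0, t)$. The spatial modulus just proved gives, for every $\rho > 0$,
$$B_\rho(x_0) \subset \{u_h(\cdot, t) \le s_0 + \omega(\rho)\} = (E_{s_0 + \omega(\rho)})_h(t).$$
On the ball side I use the empty obstacle; the free discrete evolution of $B_\rho(x_0)$ is the ball $B_{\rho_N}(x_0)$ after $N = \lfloor \tau/h \rfloor$ steps, with $\rho_{k+1} = (\rho_k + \sqrt{\rho_k^2 - 4nh})/2$. Proposition \ref{inclusion} (with $\Omega^1 = \emptyset$, $\Omega^2 = \{u^+ \le s_0 + \omega(\rho)\}$) propagated $N$ times yields
$$B_{\rho_N}(x_0) \subset (E_{s_0 + \omega(\rho)})_h(t + Nh).$$
A short recursion using the identity $\rho_{k+1}(\rho_k - \rho_{k+1}) = nh$ shows $\rho_N \ge (\rho + \sqrt{\rho^2 - 4n\tau})/2 > 0$ whenever $\rho^2 > 4n\tau$; thus $x_0$ remains in the left-hand side and $u_h(x_0, t+\tau) \le s_0 + \omega(\rho)$. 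Letting $\rho \downarrow \sqrt{4n\tau}$ and using the monotonicity $T_h(E) \subset E$ (so that $u_h(x_0, \cdot)$ is nondecreasing in time) gives
$$0 \le u_h(x_0, t+\tau) - u_h(x_0, t) \le \omega(\sqrt{4n\tau}),$$
a uniform time modulus in the spirit of Remark \ref{noforcing}.

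The main technical obstacle is the careful bookkeeping needed each time Proposition \ref{inclusion} is invoked: one must verify that the sets being compared are minimizing hulls and that their obstacles sit inside them, both after translating and after one step of the scheme (the second point is essentially the content of the maximal minimizer statement recalled after Definition \ref{defminh}). Once this is settled, the proof reduces to repeated application of monotonicity plus the elementary lower bound for $\rho_N$; the only genuine subtlety in that bound is selecting the correct root of the quadratic $\rho_N^2 - \rho\,\rho_N + n\tau \ge 0$, which is determined by continuity from the one-step case $N=1$.
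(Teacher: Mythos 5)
Your proof is correct and follows essentially the same route as the paper: translation equivariance of the scheme combined with Proposition \ref{inclusion} for the spatial modulus, and inner ball barriers whose free discrete evolution is trapped inside the constrained one (Propositions \ref{compfree} and \ref{inclusion}) for the temporal modulus. Your explicit recursion giving $\rho_N \gs \bigl(\rho+\sqrt{\rho^2-4n\tau}\bigr)/2$ is in fact a cleaner quantification of the ball's survival time than the bound stated in the paper.
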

\begin{proof}
The arguments are standard and use the translation invariance of the scheme as well as the comparison principle.
 \begin{itemize}
\item{Space continuity.} The space continuity is easy to deduce. By continuity and translation invariance, $\tilde u_0(x) := u_0(x+z) \ls u_0(x) + \omega(|z|)$ and $\tilde u^+= u^+(\cdot +z) \ls u^+ + \omega(|z|)$ so $\tilde u_h \ls u_h + \omega(|z|)$, which was expected
\item{Time continuity.} Let $(x,t) \in \Rn \times \R^+$. Let $r >0.$ By uniform continuity in space, on $B_r(x)$, $u_h(\cdot,t) \ls u_h(x,t) + \omega(r),$ which means that $A^r:=\{u_h(\cdot, t) \ls u_h(x,t) + \omega(r)\}$ contains $B_r(x_0)$. Thanks to Proposition \ref{compfree}, the time evolution of $A^r$ contains the free evolution of $B_r(x_0)$, as long as the latter exists. That means $u_h(x,t+s) \ls u_h(x,t) + \omega(r)$ for $s \ls T_r$, extinction time of $B_r(x_0)$. It is easy to see that this time is controlled, for a sufficiently small $h$, by $\frac{r^2}{\sqrt{16h}}$. \\ 
We proved that for $h$ small enough, $u_h$ is continuous in time with modulus $\tilde \omega(T_r) \ls \omega(r).$
\end{itemize}
\end{proof}
\begin{cor}
 Up to a subsequence, the collection $(u_h)_h$ has a limit which is uniformly continuous in space and time. 
\end{cor}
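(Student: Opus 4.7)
The plan is to invoke Arzel\`a–Ascoli combined with a standard diagonal extraction, using Proposition \ref{apsoluc} to supply the equicontinuity and the fact that $u_h$ takes values in $[-1,1]$ to supply the uniform boundedness.

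First, I would fix an exhaustion of $\Rn \times [0,T)$ by compact sets $K_j := \overline{B_j(0)} \times [0,T-1/j]$. On each $K_j$ the family $(u_h)_h$ is uniformly bounded (all values lie in $[-1,1]$) and equicontinuous in both variables with a common modulus (namely $\omega$ in space and the time modulus produced in Proposition \ref{apsoluc}, which for $h$ small enough can be taken independent of $h$). By the classical Arzel\`a–Ascoli theorem, there is a subsequence of $(u_h)$ converging uniformly on $K_j$ to a continuous function. A standard diagonal extraction then produces a single subsequence $u_{h_k}$ converging uniformly on every $K_j$ to a function $u : \Rn \times [0,T) \to [-1,1]$.

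Next, I would verify that $u$ inherits the uniform modulus of continuity. Since for each fixed $k$ and all $(x,t),(y,s)$ in a given compact,
\[
|u_{h_k}(x,t) - u_{h_k}(y,s)| \ls \omega(|x-y|) + \tilde\omega(|t-s|),
\]
passing to the limit $k \to \infty$ pointwise yields the same estimate for $u$. As this inequality holds on every compact, it holds on all of $\Rn \times [0,T)$, so $u$ is uniformly continuous in space and time on $\Rn \times [0,T)$.

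There is no real obstacle here; the only mildly delicate point is to make sure the time modulus in Proposition \ref{apsoluc} is uniform in $h$ for $h$ sufficiently small (it is, by the explicit estimate $T_r \gs r^2/\sqrt{16h}$ used there, so restricting to $h \ls h_0(r)$ gives a uniform control), and the uncompactness of $\Rn \times [0,T)$, which is handled by the exhaustion argument above.
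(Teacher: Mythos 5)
Your argument is correct and is exactly the (unwritten) argument the paper intends: the corollary is stated as an immediate consequence of Proposition \ref{apsoluc}, via Arzel\`a--Ascoli on an exhaustion by compacts, diagonal extraction, and passage to the limit in the modulus estimates. Your remark about uniformity of the time modulus in $h$ is a fair point of care, and is indeed supplied by the extinction-time bound in the proof of Proposition \ref{apsoluc}.
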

Let us denote it by $u$ (we will see that this limit does not depend on the subsequence).

We are now able to show the main proposition of this section.
\begin{prop}
The function $u$ is the viscosity solution of \eqref{vsol}.
\label{limitscheme}
\end{prop}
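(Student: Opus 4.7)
The plan is to verify directly that the uniform limit $u$ satisfies each clause of Definition \ref{defsol} both as a subsolution and as a supersolution of \eqref{vsol} with $k\equiv 0$, and then to invoke the comparison principle (Proposition \ref{compple}) to identify $u$ with the unique viscosity solution. A pleasant by-product is that this identification forces the full family $(u_h)$ to converge, and not only the extracted subsequence. Continuity of $u$ is Proposition \ref{apsoluc}; the initial condition $u(\cdot,0)=u_0$ comes from $u_h(\cdot,0)=u_0$; the lower bound $u\gs u^-=-1$ is trivial. For the upper obstacle $u\ls u^+$, I apply the monotonicity of the scheme (Proposition \ref{inclusion}) to the two initial data $u_0\ls u^+$, which yields $u_h\ls (u^+)_h$; since the level sets of $u^+$ already contain $\Omega$ and the constrained scheme can only shrink them while keeping $\Omega$ inside, $(u^+)_h\ls u^+$, and the obstacle bound passes to the limit.

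The core of the argument is the verification of the two viscosity inequalities. I treat the subsolution case; the supersolution case is symmetric, with the extra hypothesis $u(\hat x,\hat t)<u^+(\hat x,\hat t)$ allowing one to ignore the upper obstacle in a small cylinder around $(\hat x,\hat t)$. Let $\varphi\in \mathcal C^2$ and assume $(\hat x,\hat t)\in \Rn\times(0,T]$ is a strict local maximum of $u-\varphi$ with $u(\hat x,\hat t)>u^-(\hat x,\hat t)=-1$. Arguing by contradiction, I suppose $\varphi_t(\hat x,\hat t)+F_\ast (D\varphi,D^2\varphi)=2\eta>0$. Perturbing $\varphi$ to $\tilde\varphi:=\varphi-\gamma(|x-\hat x|^4+(t-\hat t)^2)$ keeps the strict-maximum property and, by semicontinuity of $F_\ast$, ensures $\tilde\varphi_t+F_\ast (D\tilde\varphi,D^2\tilde\varphi)\gs \eta$ inside a small parabolic cylinder $Q=B_r(\hat x)\times[\hat t-r,\hat t+r]$, while $u<\tilde\varphi-\sigma$ on the parabolic boundary $\partial_p Q$ for some $\sigma>0$. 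Uniform convergence gives the same strict inequality on $\partial_pQ$ for $u_{h_n}$ with $n$ large, and $r$ may be taken small enough that $Q$ is separated from the obstacle. On $Q$ the level sets of $\tilde\varphi$ are then smooth classical strict supersolutions of the free mean curvature flow, and I want to propagate this to the discrete scheme: combining the monotonicity Proposition \ref{inclusion} with the explicit one-step ball evolution $T_h(B_R)=B_{(R+\sqrt{R^2-4nh})/2}$ (and Proposition \ref{compfree} for the free evolution of balls), a rolling-ball argument at a hypothetical first crossing point between a sublevel set of $u_{h_n}$ and the corresponding sublevel set of $\tilde\varphi$ produces a contradiction. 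Letting $h_n\to 0$ then yields $u(\hat x,\hat t)\ls \tilde\varphi(\hat x,\hat t)$, contradicting the strict maximum.

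The hard part is the consistency estimate underlying the rolling-ball step, namely that one iteration of $T_h$ tracks a smooth strict supersolution of mean curvature flow up to an error $o(h)$. In the present setting this reduces to the explicit ball evolution: a smooth strict supersolution has uniform inner and outer ball conditions on $Q$, and each such ball is evolved by $T_h$ precisely according to the formula above, which matches mean curvature flow to leading order, so the discrete scheme cannot cross it within one time step. The degenerate case $D\varphi(\hat x,\hat t)=0$ is absorbed into the $F_\ast$ envelope, which turns the test inequality into a trivially checkable condition when $p=0$. Once both viscosity inequalities are secured, Proposition \ref{compple} identifies $u$ with the unique viscosity solution of \eqref{vsol} and, as announced, the whole family $(u_h)$ converges.
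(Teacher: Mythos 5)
The decisive step of your argument --- ``the consistency estimate underlying the rolling-ball step'' --- does not reduce to the explicit ball evolution, and this is a genuine gap rather than a detail. A ball $B_R$ inscribed in (and tangent to) the level set $\{\tilde\varphi\ls c\}$ at the touching point shrinks under one step of $T_h$ (and under mean curvature flow) with normal speed of order $(n-1)/R$, where $R$ is at best the reciprocal of the \emph{largest} principal curvature of the level set there. The mean curvature of the level set is the \emph{sum} of its principal curvatures, which is in general strictly smaller than $(n-1)/R$ as soon as $n\gs 3$ (think of a cylinder in $\R^3$: $H=1/\rho$, but the inscribed ball of radius $\rho$ recedes at speed $2/\rho$). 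So the ball barrier obtained from Propositions \ref{inclusion} and \ref{compfree} only yields the crude speed bound $O(1/r)$ --- exactly what Proposition \ref{apsoluc} uses for equicontinuity in time --- and it cannot prevent the discrete flow from crossing a strict smooth supersolution of mean curvature flow within one step. To get the sharp consistency you must use the full minimality of $T_h(E)$ against a competitor built from the sublevel set of the test function (in the paper: $E^h\cap F^\eps$ with $F^\eps$ a translate of $\{\varphi(\cdot,t_h)\ls\varphi(x_h,t_h)\}$), compare perimeters, apply Green's formula to convert the boundary terms into $\int\div\bigl(\nabla\varphi/|\nabla\varphi|\bigr)$, and couple this with a Taylor expansion at the nearest boundary point $y_h$ of the previous step to relate $d_{E_h^{n-1}}(x_h)$ to $h\varphi_t/|\nabla\varphi|$. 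None of this is recoverable from the one-step evolution of balls.

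Two further points are treated too quickly. First, the degenerate case $D\varphi(\hat x,\hat t)=0$ is not ``trivially checkable'': $F_\ast(0,X)$ and $F^\ast(0,X)$ are nontrivial envelopes, and one must either extract touching points with nonvanishing gradient or perturb the test function by $|x-\hat x|^\alpha$, $\alpha>2$, as the paper does. Second, your derivation of $u\ls u^+$ applies the scheme to $u^+$ itself, which requires the level sets of $u^+$ to be minimizing hulls --- not part of the hypotheses; the bound should instead come from the constraint $\Omega\subset F$ built into \eqref{mindisc} at each level. The overall architecture (contradiction at a strict extremum, localization, passage to the limit, and identification via Proposition \ref{compple}) is the right one and matches the paper's, but the heart of the proof is missing.
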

\begin{proof}
This result is already known with no obstacles (one can directly apply \cite{chambolle122}, Th. 4.6 or, with a setting closer to ours, \cite{thouroude12}, Th 3.6.1. See also \cite{eto12}.) and could easily be adapted. Nonetheless, since our framework is simpler than \cite{chambolle122}, we give the whole proof here. We have just seen that $u$ is uniformly continuous in space and time. In addition, $u (t=0) = u_0$ by construction and the initial conditions are satisfied. We only have to check the fourth point of the definition (we only deal with the supersolution thing, the subsolution one can be treated similarly but is simpler because there is no real lower obstacle here). Let $(x,t) \in \Rn$. Either $u(x,t) = u^+(x,t)$ and nothing has to be done, or $u(x,t) < u^+(x,t)$. We proceed by contradiction and assume that there exists a smooth function $\varphi$ and $(\hat x, \hat t)$ such that $u-\varphi$ reaches a minimum at $(\hat x,\hat t)$ and that
\begin{equation} \left(\varphi_t - F^\ast(D\varphi, D^2 \varphi) \right) (\hat x, \hat t)< 0.\label{contrasovi}\end{equation}
One can assume that the minimum is strict and that $u- \varphi (\hat x, \hat t) = 0$. \\
First, we also assume that 
$$\nabla \varphi(\hat x, \hat t) \neq 0.$$
Thanks to an analogous of Proposition \ref{limjet}, one can find, for $h$ sufficiently small, $(x_h,t_h) \to (\hat x,\hat t)$ such that $u_h - \varphi$ reaches a minimum at $(x_h,t_h)$, $\nabla \varphi(x_h,t_h) \neq 0$, $u_h(x_h,t_h) < u^+(x_h,t_h)$ and $\left(\varphi_t - F(D\varphi, D^2 \varphi) \right) (x_h,t_h)< 0.$

Since $u_h -\varphi$ is minimal at $(x_h,t_h)$, we have
$$ E^h := \{x\, \vert \,u_h(x,t_h) \ls u_h(\hat x_h, \hat t_h) \} \subset \{x\, \vert \, \varphi(x,t_h) \ls \varphi(x_h, t_h)\} =: F. $$
Thanks to the minimum condition and continuity of $u_h$ and $\varphi$, we must have $x_h \in \partial E^h \cap \partial F$. In addition, $\nabla \varphi(x_h,t_h) \neq 0$ so $\partial F$ is a $\mathcal C^1$ graph around $x_h$. Recall finally that by construction, $E^h$ is some $E^n_h := T_h^{n}(E_0)$ with $n = [t_h/h]$ and therefore, minimizes $$\per(E) + \frac 1h \int_{E \Delta E_h^{n-1}} \left \vert d_{E_h^{n-1}} \right \vert.$$
Let $\nu_F =\frac{ \nabla \varphi }{|\nabla \varphi|}(x_h,t_h)$ be the unit vector normal to $F$ toward $F^c$ and consider 
$$F^\eps := F - \eps \nu$$
with $\eps$ sufficiently small such that $E^h \cap F^\eps$ is a compact perturbation of $E^h$ (from inside, see Figure \ref{pictlim}). 
\begin{figure}
\centering
\includegraphics[scale=0.4]{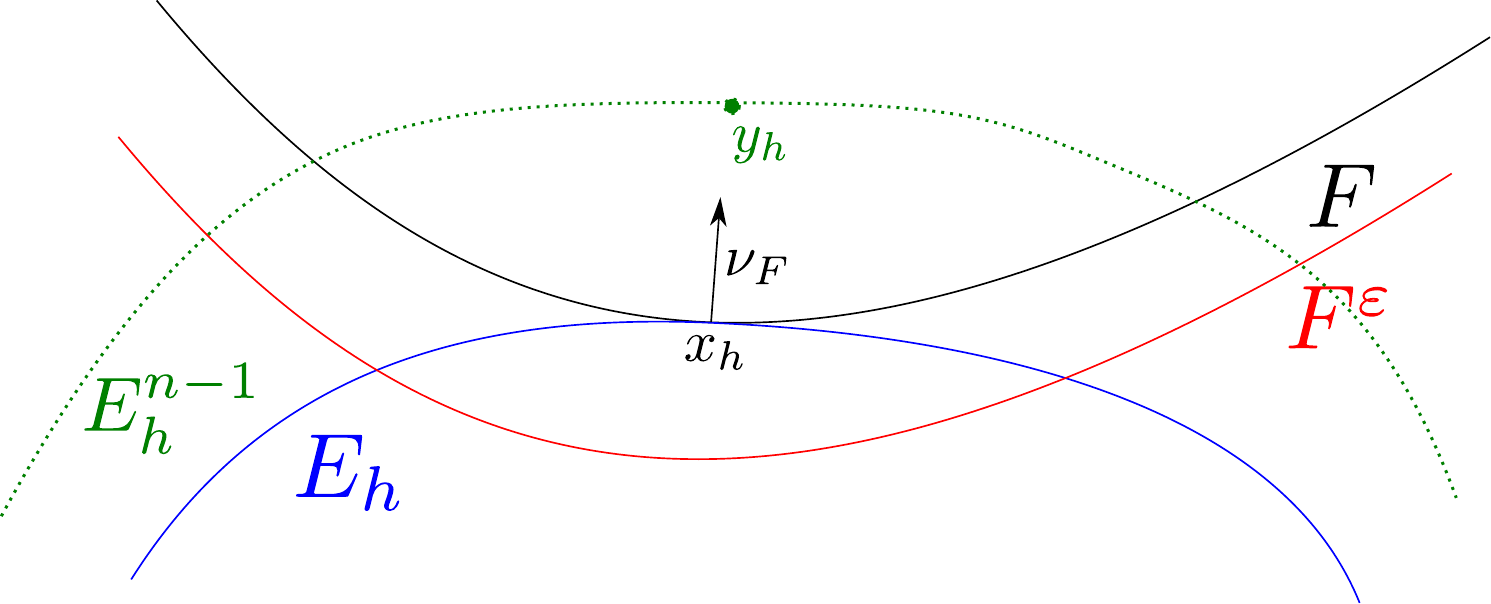}
\caption{Proof of Proposition \ref{limitscheme}}
\label{pictlim}
\end{figure}

This is possible since the minimum is strict. The minimizing property of $E^h$ can be written as
$$ \per(E_h^n) + \frac 1h \int_{E_h^n \Delta E_h^{n-1}} \left \vert d_{E_h^{n-1}} \right \vert \ls \per (E^h \cap F^\eps) + \frac 1h \int_{(E^h \cap F^\eps)  \Delta E_h^{n-1}} \left \vert d_{E_h^{n-1}} \right \vert.$$
Thus we have, recalling that the flow is monotone since we are dealing with minimizing hulls,
$$ \int_{ E_h^{n-1} \setminus E_h^n} \left \vert d_{E_h^{n-1}} \right \vert - \int_{E_h^{n-1} \setminus (E^h \cap F^\eps)  } \left \vert d_{E_h^{n-1}} \right \vert \ls h(\per (E^h \cap F^\eps) - \per(E^h)).$$
Now, let us notice that since $F^\eps$ is a smooth set, we have
$$ \per(E^h \cap F^\eps) = \per(E^h ; F^\eps) + \per( F^\eps; E^h)$$ so we can rewrite
\begin{equation}- \int_{E^h \setminus F^\eps} \left \vert d_{E_h^{n-1}} \right \vert \ls h(\per(F^\eps ; E^h) - \per(E^h ; (F^\eps)^c)).\label{eq:compar}\end{equation}
Finally, we get
$$ \int_{E^h \setminus F^\eps} \left \vert d_{E_h^{n-1}} \right \vert \gs h( \per(E^h ; (F^\eps)^c)- \per(F^\eps ; E^h)). $$
Observing that if $\nu^\eps$ is the outer normal vector to $F^\eps$,
$$\per(F^\eps ; E^h) = \int_{\partial F^\eps \cap E^h} 1 \ d\mathcal H^{n-1} = \int_{\partial F^\eps \cap E^h} \frac{\nabla \varphi}{|\nabla \varphi|} \cdot \nu^\eps \ d\mathcal H^{n-1}  $$
and if $\nu^h$ is the outer normal to $E^h$ and $\partial^\ast E^h$ its reduced boundary, we have
$$\per(E^h ; (F^\eps)^c) = \int_{\partial^\ast E^h \cap (F^\eps)^c} 1 \ d \mathcal H^{n-1} \gs \int_{\partial^\ast E^h \cap (F^\eps)^c} \frac{\nabla \varphi}{|\nabla \varphi|} \cdot \nu^h \ d \mathcal H^{n-1}.$$
Plugging into \eqref{eq:compar} and denoting by $\nu$ the outer normal vector to $E^h \setminus F^\eps$ ($\nu = \nu^h$ on $\partial E^h$ and $\nu = - \nu^\eps$ on $\partial F^\eps$) we have
$$ \int_{E^h \setminus F^\eps} \left \vert d_{E_h^{n-1}} \right \vert \gs h\int_{\partial^\ast (E^h \setminus F^\eps)} \frac{\nabla \varphi}{|\nabla \varphi|} \cdot \nu \ d \mathcal H^{n-1},$$
which, applying Green's formula, gives
$$\int_{E^h \setminus F^\eps} \left \vert d_{E_h^{n-1}} \right \vert \gs \int_{E^h \setminus F^\eps}  h\div \left( \frac{\nabla \varphi}{|\nabla \varphi|} \right) .$$
Letting $\eps$ go to zero, we get, at $(x_h,t_h)$,
\begin{equation}\left \vert d_{E_h^{n-1}} \right \vert \gs h\div \left( \frac{\nabla \varphi}{|\nabla \varphi|} \right). \label{eq:contrad} \end{equation}

Now, let $y_h \in \partial E_h^{n-1}$ which realizes the distance between $x_h$ and $(E_h^{n-1})^c$. By construction, we have
$$ u_h(y_h, t_h - h) = u_h(x_h,t_h)$$
So, since $(x_h,t_h)$ realizes the minimum of $u_h - \varphi$, we have 
$$\varphi(y_h,t_h-h) \ls \varphi(x_h,t_h).$$
Then, let us write
$$\varphi(y_h,t_h -h) = \varphi(x_h,t_h) -h \varphi_t (x_h,t_h) + \nabla \varphi (x_h,t_h) \cdot (y_h-x_h) +o(h+x_h-y_h), $$
we get
$$-h \varphi_t (x_h,t_h) + \nabla \varphi (x_h,t_h) \cdot (y_h-x_h) +o(h+x_h-y_h) \ls 0.$$
Since the level sets of $u_h$ are minimizing hulls, $u_h$ is non decreasing, which implies $\varphi_t \gs 0$. On the other hand, $\nabla \varphi(x_h,t_h)$ must point outside $E^h$ so $\nabla \varphi (x_h,t_h) \cdot (y_h-x_h) \gs 0$.
This implies
$$ |\nabla \varphi (x_h,t_h)| \left \vert d_{E_h^{n-1}} \right \vert \ls  h \varphi_t.$$
Replacing that into \eqref{eq:contrad}, we obtain, at $(x_h,t_h)$,
$$\varphi_t \gs |\nabla \varphi| \div\left(\frac{\nabla \varphi}{|\nabla \varphi|} \right).$$
Since $\varphi$ is smooth and $\nabla \varphi( \hat x , \hat t) \neq 0$; we can pass to the limit in $h$ and get a contradiction.

Let us now deal with the case $\nabla \varphi(\hat x, \hat t) = 0$ and consider the sequence $(x_h,t_h)$ constructed as before. Then, either one can find a subsequence $(x_{h_k},t_{h_k}) \to (x,t)$ such that $\nabla \varphi (x_{h_k},t_{h_k}) \neq 0$ or we have for every $h$ sufficiently small, $\nabla \varphi (x_h,t_h) =0.$ \\
In the first alternative, note that what we have just done still applies with minor changes. Indeed, we just have to get the contradiction taking the limsup instead of the full limit. The definition of $F^\ast$ ensures we keep the inequality. \\
On the other hand, if $\nabla \varphi (x_h,t_h) =0.$ for every small $h$, then we add a term $|x-\hat x|^\alpha$ (we denote by $\tilde \varphi$ the sum), with $\alpha > 2$, to $\varphi.$ The first and second derivative of $\varphi$ do not change. If one can find $\alpha$ such that $u_h - \tilde \varphi$ has a maximum at some $(x_\alpha^h,t_\alpha^h)$ with $\nabla \tilde \varphi (x_\alpha^{h_n},t_\alpha^{h_n}) \neq 0$ for a subsequence $h_n \to 0$, then we get the same contradiction. If not, that means that 
$$\forall \alpha > 2, \quad \nabla \varphi (x_\alpha^h,t_\alpha^h) = \alpha x_\alpha^h |x_\alpha^h - x_0|^{\alpha - 2}$$ for all $h$ sufficiently small, which imposes that $\varphi$, which is smooth, must have a non zero derivative of order $k \ls \alpha -1$ at $(\hat x,\hat t)$. This is not possible.

\end{proof}

\subsection{The time-limit is locally minimal}
We saw that since $u_0$ has minimizing hull level sets, so does $u_h(\cdot,t)$ and $u$ is therefore nondecreasing in time (this is true for $u_h$). As $u$ is uniformly equicontinuous on each compact set, letting $t$ go to $+\infty$ we have a locally uniform convergence to a limit $u_\infty$ which is a viscosity solution of 
$$|Du| \div \left( \frac{Du}{|Du|} \right) = 0$$ with obstacles $u^+,u^-$, thanks to classical theory of viscosity solutions.

Thanks to \cite{ilm98}, Theorem 3.10, one has the following result.
\begin{prop}
Let us assume that $\mathcal H^{n-1}(\{u=0\}) < \infty.$ Then, there exists a relatively open set $U \subset u^{-1}(s)$ with $H^{n-8-\alpha}(u^{-1}(0) \setminus U)=0$ for all $\alpha >0$, such that $u^{-1}(0) \setminus \Omega$ is an analytic minimal surface in a neighborhood of each point of $U$. Moreover, it is stable and stationary in the varifold sense (classically on $U$).
\label{limmin}
\end{prop}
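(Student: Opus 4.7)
The plan is to reduce the statement to an application of Ilmanen's regularity theorem, which we may invoke as soon as we know that $u_\infty^{-1}(0) \setminus \Omega$ is a stable stationary minimal surface in the varifold sense. The key point is therefore to extract, from the passage to the limit $t\to\infty$ in the discrete variational scheme of \eqref{mindisc}, a local minimality property for the zero level set of $u_\infty$.

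First, I would fix a regular point of the limit, i.e.\ a point $x_0 \in \{u_\infty = 0\} \setminus \Omega$, and a ball $B_r(x_0)$ disjoint from $\Omega$. On such a ball, the obstacle $u^+$ is strictly positive, so for $t$ large enough $u_\infty < u^+$ in $B_r(x_0)$, and the upper obstacle plays no role in a neighborhood. By Proposition \ref{limitscheme}, $u_\infty$ is a viscosity solution of $|\nabla u|\div(\nabla u/|\nabla u|) = 0$ in $B_r(x_0)$ (without active obstacles), hence the zero level set has vanishing generalized mean curvature there, which yields stationarity.

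To obtain stability and local minimality, I would go back to the discrete scheme and exploit that each $E_h^n = \{u_h(\cdot, nh) \ls 0\}$ is a minimizing hull and minimizes $\per(E) + \frac{1}{h}\int_{E \Delta E_h^{n-1}} d_{E_h^{n-1}}$ among sets $E \supset \Omega$. Fix a compactly supported outer perturbation $E' \supset E_h^n$ with $E' \setminus E_h^n \Subset B_r(x_0)$; testing the minimality against $E'$, one obtains $\per(E_h^n) \ls \per(E') + \frac{1}{h}\int_{E'\setminus E_h^n} |d_{E_h^{n-1}}|$. Because the flow is monotone and the set $E_\infty := \{u_\infty \ls 0\}$ is the monotone limit of $E_h^n$ as $t=nh \to \infty$, the distance term $d_{E_h^{n-1}}$ tends to $0$ uniformly on any such compactly supported perturbation region (this is where uniform continuity from Proposition \ref{apsoluc} and the monotone convergence to the fixed point $u_\infty$ are used). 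Passing to the limit first in $h$ and then in $t$, we get $\per(E_\infty) \ls \per(E')$ for every such outer perturbation, so $E_\infty$ is again a minimizing hull. For inner perturbations $E' \subset E_\infty$ with $E' \supset \Omega$ and $E_\infty \setminus E' \Subset B_r(x_0)$, we would pass to the limit symmetrically using the discrete scheme's minimality to rule out the possibility that a more economical set is available: any strict perimeter gain would be detected at finite $h$ and would contradict monotonicity and convergence of $u_h$ to $u_\infty$. Together these two comparisons show that $E_\infty$ locally minimizes perimeter in $B_r(x_0) \setminus \Omega$ among sets containing $\Omega$; this gives stationarity and stability of the varifold associated with $\partial E_\infty$.

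Having identified $\partial E_\infty \setminus \Omega = u_\infty^{-1}(0) \setminus \Omega$ as (locally) a stable stationary integral varifold with locally finite $\mathcal H^{n-1}$ mass (by assumption), I would directly invoke Theorem 3.10 of \cite{ilm98}, which provides the existence of a relatively open set $U \subset u_\infty^{-1}(0)$ on which the surface is analytic and minimal, together with the bound $\mathcal H^{n-8-\alpha}(u_\infty^{-1}(0)\setminus U) = 0$ for every $\alpha > 0$. The main obstacle, and the place where care is needed, is precisely the passage to the limit in the discrete scheme to justify both-sided local minimality at the level of $u_\infty$: one must ensure that the $L^1$ penalization $\frac{1}{h}\int_{\cdot}d_{E_h^{n-1}}$ becomes negligible on compactly supported perturbations as $t \to \infty$, and that the constraint $E \supset \Omega$ is compatible with the perturbations considered (which is automatic once the perturbation is localized in $B_r(x_0) \Subset \Omega^c$).
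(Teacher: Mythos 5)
Your opening reduction is the right one, and it is in fact all the paper does: the preceding paragraph establishes that $u_\infty$ is a viscosity solution of $|\nabla u|\div\left(\frac{\nabla u}{|\nabla u|}\right)=0$ with the obstacles (by monotonicity in time, locally uniform convergence, and stability of viscosity solutions under such limits), and the proposition is then a direct application of Theorem 3.10 of \cite{ilm98}, which already supplies stationarity, stability and the Schoen--Simon-type partial regularity for the level sets of such an equilibrium solution. Your long variational detour through the discrete scheme is therefore not needed, and as written it contains two genuine gaps.

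First, the claim that the penalization term becomes negligible is false as stated: for a fixed compactly supported outer perturbation $E'\supset E_h^n$ with $E'\setminus E_h^n\Subset B_r(x_0)$, the integrand $|d_{E_h^{n-1}}|$ on $E'\setminus E_h^n$ is of the order of the size of the perturbation, which is fixed independently of $h$ and $n$, while the prefactor $\frac1h$ blows up as $h\to 0$; nothing forces $\frac1h\int_{E'\setminus E_h^n}|d_{E_h^{n-1}}|\to 0$, and uniform continuity of $u_h$ does not help here. The outward (minimizing hull) property of $E_\infty$ is obtained differently: each $E_h^n$ is a minimizing hull by Spadaro's proposition, and a decreasing limit of minimizing hulls is again a minimizing hull; no passage to the limit in the Euler--Lagrange inequality is involved. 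Second, the inner-perturbation step is not an argument ("any strict perimeter gain would be detected at finite $h$" is an assertion, not a proof), and it aims at a conclusion that is stronger than what is true in general: limits of mean-convex evolutions are stationary and \emph{stable}, not locally two-sided perimeter minimizing, and it is precisely stability (not minimality) that Theorem 3.10 of \cite{ilm98} extracts and that accounts for the codimension-$8$ bound via Schoen--Simon. So you should drop the attempted proof of two-sided local minimality, keep your first paragraph, and invoke the Ilmanen--Sternberg--Ziemer theorem directly on the viscosity solution $u_\infty$ under the hypothesis $\mathcal H^{n-1}(\{u_\infty=0\})<\infty$.
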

Note in particular that non empty interior of $u^{-1}(s)$ can occur for only countable many $s$.
\subsection{Comparison with mean convex hull}
In \cite{spadaro11}, E. Spadaro is interested in the long time behavior of the discrete scheme \eqref{mindisc} but with a step $h$ which remains fixed. In this short subsection, we prove that if $\{u=0\}$ does not fatten, then our approach and Spadaro's build the same surface. The dimension of the ambient space $n$ is assumed to be less or equal to 7. Here are the theorems he gets:
\begin{thm}[Spadaro, \cite{spadaro11}]
Let $\Omega \subset \R^n$, $n \ls 7$, be a $\mathcal C^{1,1}$ closed set and $E_0 \supset \Omega$ a minimizing hull. Then, for a fixed $h$, the iterative scheme \eqref{mindisc} converges in time to some limit $E_\infty^h$. In addition, the $E_\infty^h$ converge monotonically to some $E_\infty$ which satisfies
\begin{itemize}
 \item $E_\infty$ is $\mathcal C^{1,1}$,
\item $E_\infty$ is a minimizing hull,
\item $\partial E_\infty \setminus \Omega$ is a (smooth) minimal surface.
\end{itemize}
\end{thm}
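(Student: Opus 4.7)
The plan is to establish the three assertions in turn, following the variational structure of the scheme \eqref{mindisc}. \emph{Stage 1 (convergence for fixed $h$).} Since $E_0$ is a minimizing hull containing $\Omega$, every iterate $E_h^k := T_h^k(E_0)$ is again a minimizing hull (by the preservation property stated earlier), and the comparison $T_h(E) \subset E$ forces $E_h^{k+1} \subset E_h^k$. I set $E_\infty^h := \bigcap_{k} E_h^k$, an $L^1$ monotone limit. A uniform perimeter bound is obtained by testing \eqref{mindisc} against $F = E_h^k$, giving $\per(E_h^{k+1}) \ls \per(E_h^k)$. Passing to the limit in the minimality inequality for $E_h^{k+1}$, using lower semicontinuity of the perimeter and $L^1$-continuity of the penalty term (the distance functions converge uniformly on compact sets since $E_h^k \to E_\infty^h$ in $L^1$), I obtain that $E_\infty^h$ is itself a minimizer in \eqref{mindisc} with $E = E_\infty^h$, hence a fixed point of $T_h$.

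\emph{Stage 2 (passage $h \to 0$).} The family $(E_\infty^h)_h$ lies in the compact class of finite-perimeter subsets of $E_0$ containing $\Omega$, so up to a subsequence it converges in $L^1$ to some $E_\infty$. The claimed monotonicity in $h$ is the key content: a fixed point $E$ of $T_h$ satisfies the $h$-almost-minimality $\per(E) \ls \per(F) + \frac{1}{h} \int_{F \Delta E} d_E$ for all $F \supset \Omega$, and this condition is more restrictive for smaller $h$. I would exploit this by testing the fixed-point inequalities for $E_\infty^h$ and $E_\infty^{\tilde h}$ with $h > \tilde h$ against their union and intersection (as in the proof of Proposition on the discrete flow earlier), combined with $\per(A \cap B) + \per(A \cup B) \ls \per(A) + \per(B)$, to deduce the ordering between the two fixed points. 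Once monotonicity in $h$ is secured, the full limit $E_\infty$ exists (not just along subsequences).

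\emph{Stage 3 (regularity).} That $E_\infty$ is a minimizing hull follows from stability of the property under monotone $L^1$ limits: any compact perturbation $F \supset E_\infty$ with $F \setminus E_\infty$ compact approximates perturbations of $E_\infty^h$ for $h$ small, and lower semicontinuity of the perimeter transfers the inequality. On $\partial E_\infty \setminus \Omega$ the obstacle is inactive, so $E_\infty$ is locally a perimeter minimizer under compact perturbations; by the De Giorgi--Federer regularity theorem (and the dimensional restriction $n \ls 7$, which excludes singular minimal cones), this part of the boundary is an analytic minimal surface. The $\mathcal C^{1,1}$ regularity globally is the delicate point: at a contact point $x \in \partial E_\infty \cap \partial \Omega$ one uses the $\mathcal C^{1,1}$ hypothesis on $\Omega$ to place interior/exterior tangent balls, yielding a one-sided bound on the curvature of $\partial E_\infty$ via comparison with the explicit evolution of balls (recalled earlier), that survives the passage $h \to 0$ and matches the minimal-surface bound on the free part.

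\emph{Main obstacle.} The hardest step is the $\mathcal C^{1,1}$ regularity across the contact set $\partial E_\infty \cap \partial \Omega$. The interior and free-boundary regularity (minimal surfaces away from $\Omega$, and $L^1$-stability of minimizing hulls) use fairly standard machinery, and the monotone passage in $h$ is a soft compactness argument once the right two-obstacle inequality has been set up. But producing a uniform curvature bound up to the contact set, so that the $\mathcal C^{1,1}$ estimate is inherited by the $h\to 0$ limit, requires explicitly using the $\mathcal C^{1,1}$ hypothesis on $\Omega$ together with an obstacle-type comparison for the discrete scheme, and this is the part most likely to demand care.
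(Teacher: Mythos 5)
This theorem is not proved in the paper at all: it is quoted verbatim as a result of Spadaro, cited from \cite{spadaro11} (``Here are the theorems he gets''), and used as a black box in the comparison with the mean convex hull. So there is no proof in the paper to compare your proposal against; what follows is an assessment of your sketch on its own terms.

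Your overall architecture (monotone iteration for fixed $h$, monotonicity in $h$, then regularity) is the natural one, but Stage~2 as written has a genuine gap. You propose to order the two fixed points $E_\infty^h$ and $E_\infty^{\tilde h}$ by testing their minimality inequalities against $E_\infty^h\cap E_\infty^{\tilde h}$ and $E_\infty^h\cup E_\infty^{\tilde h}$ and invoking submodularity of the perimeter. But the two inequalities carry \emph{different} reference distance functions ($d_{E_\infty^h}$ versus $d_{E_\infty^{\tilde h}}$) and different weights ($1/h$ versus $1/\tilde h$), and the cancellation used in the paper's two comparison propositions relies either on a common reference set or on an a priori inclusion $E^1\subset E^2$ giving $d_{E^2}\ls d_{E^1}$ --- which here is exactly what you are trying to prove. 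The correct and much simpler route is iterative: the one-step comparison $T_h(E)\subset T_{\tilde h}(E)$ for $h>\tilde h$, combined with the inclusion-preservation of $T_{\tilde h}$, gives by induction $T_h^k(E_0)\subset T_{\tilde h}^k(E_0)$ for all $k$, hence $E_\infty^h\subset E_\infty^{\tilde h}$ after intersecting over $k$, and the full monotone limit $E_\infty$ exists. Separately, in Stage~1 you should justify the convergence of the penalty term more carefully than ``$L^1$-continuity'': $L^1$ convergence of sets does not by itself give convergence of signed distance functions; here you must use that the $E_h^k$ are a decreasing sequence of closed sets so that $d_{E_h^k}$ converges monotonically to $d_{E_\infty^h}$. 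Finally, you correctly identify the global $\mathcal C^{1,1}$ regularity across the contact set $\partial E_\infty\cap\partial\Omega$ as the hard analytic core, but your tangent-ball sketch is only a gesture; this is precisely the part of Spadaro's work that cannot be reconstructed from the ingredients available in this paper, which is why the result is cited rather than proved here.
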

In addition, Spadaro uses this construction starting from $E_0$ with obstacles $\Omega_\varepsilon := \{x \in \R^n \, \vert \, d(x,\Omega) \ls \varepsilon\}$ to build a limit $E_\infty^\varepsilon.$ 
\begin{thm}[Spadaro]
 The set $$\Omega^{\text{mc}} := \bigcap_{\varepsilon >0} E_\infty^\varepsilon$$ is the mean convex hull of $\Omega$. That means
$$\Omega^{\text{mc}} = \bigcap_{\Omega \subset \Theta \in \mathcal A} \Theta$$
where $\mathcal A$ is the family of $\Theta \in \R^n$ such that for every minimal surface $\Sigma$ such that $\partial \Sigma \subset \Theta$, we have $\Sigma \subset \Theta.$
\end{thm}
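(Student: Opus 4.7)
Denote by $\mathcal H(\Omega) := \bigcap_{\Theta \in \mathcal A, \, \Omega \subset \Theta} \Theta$ the right-hand side. The strategy is to prove both inclusions $\Omega^{\text{mc}} \supset \mathcal H(\Omega)$ and $\Omega^{\text{mc}} \subset \mathcal H(\Omega)$ by exploiting that (i) each $E_\infty^\varepsilon$ is itself a minimizing hull with smooth minimal boundary outside $\Omega_\varepsilon$ (previous theorem) and (ii) $\mathcal A$ is closed under arbitrary intersections, so $\mathcal H(\Omega)$ is the smallest element of $\mathcal A$ containing $\Omega$.

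For the inclusion $\mathcal H(\Omega) \subset \Omega^{\text{mc}}$, it suffices to prove that each $E_\infty^\varepsilon$ belongs to the family $\mathcal A$ (and contains $\Omega$), which gives $\mathcal H(\Omega) \subset E_\infty^\varepsilon$ for every $\varepsilon$ and hence $\mathcal H(\Omega) \subset \bigcap_\varepsilon E_\infty^\varepsilon$. To prove $E_\infty^\varepsilon \in \mathcal A$, take a minimal surface $\Sigma$ with $\partial \Sigma \subset E_\infty^\varepsilon$, and suppose by contradiction that $\Sigma \setminus E_\infty^\varepsilon \neq \emptyset$. A connected component $\Sigma'$ of $\Sigma \cap (E_\infty^\varepsilon)^c$ has $\partial \Sigma' \subset \partial E_\infty^\varepsilon$. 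If $\partial \Sigma'$ meets the smooth part $\partial E_\infty^\varepsilon \setminus \overline{\Omega_\varepsilon}$, the strong maximum principle for minimal surfaces forces $\Sigma$ to coincide locally with $\partial E_\infty^\varepsilon$, and unique continuation extends this to $\Sigma \subset \partial E_\infty^\varepsilon \subset E_\infty^\varepsilon$, a contradiction. If $\partial \Sigma'$ lies entirely in $\partial E_\infty^\varepsilon \cap \overline{\Omega_\varepsilon}$, one uses the $C^{1,1}$ regularity of $\Omega$ (hence of $\Omega_\varepsilon$) together with Hopf's boundary lemma at the touching point to derive a contradiction.

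For the reverse inclusion $\Omega^{\text{mc}} \subset \mathcal H(\Omega)$, fix $\Theta \in \mathcal A$ closed with $\Omega \subset \Theta$. The plan is to prove $E_\infty^\varepsilon \subset \Theta \cup \overline{\Omega_\varepsilon}$ for every $\varepsilon > 0$, from which intersecting over $\varepsilon$ yields $\bigcap_\varepsilon E_\infty^\varepsilon \subset \Theta \cup \overline{\Omega} = \Theta$. To establish the containment, first observe that membership in $\mathcal A$ forces $\Theta$ to be a minimizing hull (otherwise a competitor $F \supset \Theta$ with $\per F < \per \Theta$ would expose a minimal surface on $\partial F \setminus \partial \Theta$ escaping $\Theta$). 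Then argue that the barrier set $K_\varepsilon := \Theta \cup \overline{\Omega_\varepsilon}$ is a minimizing hull containing $\Omega_\varepsilon$, so by the comparison result (Proposition \ref{inclusion}) applied to the scheme $T_h$ with obstacle $\Omega_\varepsilon$, the iterates starting from $E_0 \cap K_\varepsilon$ stay inside $K_\varepsilon$; combined with the monotonicity $E_0 \supset T_h(E_0) \supset \cdots$ and Proposition \ref{compfree}, the part of $T_h^n(E_0)$ lying outside $K_\varepsilon$ must shrink to zero as $n \to \infty$, forcing $E_\infty^\varepsilon \subset K_\varepsilon$.

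The main obstacle is the last step: showing that $\Theta \in \mathcal A$ implies $\Theta$ (and then $\Theta \cup \overline{\Omega_\varepsilon}$) is a minimizing hull strong enough to control the variational scheme. This requires a perimeter comparison using the defining property of $\mathcal A$ at the possibly irregular interface $\partial \Theta \cap \partial \Omega_\varepsilon$, and a secondary technical issue is the non-smoothness of $\partial E_\infty^\varepsilon$ where it meets $\overline{\Omega_\varepsilon}$, handled via $C^{1,1}$ regularity of $\Omega$ and Hopf's lemma.
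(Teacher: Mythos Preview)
The paper does not prove this theorem at all: it is quoted as a result of Spadaro (note the attribution ``[Spadaro]'' in the theorem header and the reference to \cite{spadaro11}), and immediately after stating it the author moves on to a different goal, namely showing that $\Omega^{\text{mc}}$ coincides with the zero level set of $u_\infty$. There is therefore no in-paper proof to compare your proposal against.

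On the merits of your sketch itself, the two-inclusion strategy is the natural one, but the argument for $\Omega^{\text{mc}} \subset \mathcal H(\Omega)$ has real gaps. The claim that $\Theta \in \mathcal A$ forces $\Theta$ to be a minimizing hull is not established by your parenthetical: a competitor $F \supset \Theta$ with $\per F < \per \Theta$ need not have a minimal surface anywhere on $\partial F \setminus \partial \Theta$, so the defining property of $\mathcal A$ is never triggered. More seriously, even if $K_\varepsilon = \Theta \cup \overline{\Omega_\varepsilon}$ were a minimizing hull, your invocation of Proposition~\ref{inclusion} requires the initial datum to already sit inside $K_\varepsilon$, which you have not arranged (there is no assumption that $E_0 \subset \Theta$), and Proposition~\ref{compfree} only compares the constrained flow to the \emph{free} one from below, which gives no upper barrier. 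The assertion that ``the part of $T_h^n(E_0)$ lying outside $K_\varepsilon$ must shrink to zero'' is exactly the substantive content one needs, and nothing in the cited propositions delivers it. In Spadaro's actual argument this step relies on a different mechanism (barriers built from mean-convex sets and the regularity of $\partial E_\infty^\varepsilon$), not on the monotonicity propositions of the present paper.
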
 
Let us show that $\Omega^{mc}$ agrees with our limit $\{u_\infty = 0\}.$
Since Spadaro's work is in low dimension, the open set $U$ in Proposition \ref{limmin} is the whole $u^{-1}(0)$. Let us assume that $u_\infty^{-1}(0)$ does not fatten. Hence, $\partial \{u_\infty\ls 0\} = \{u_\infty = 0 \}$ and $\{u_\infty= 0\} \setminus \Omega$ is a minimal hypersurface with boundary in $\Omega$. Using the very definition of the global barrier, we deduce that $\{u \ls 0 \} \subset \Omega^{mc}$.

Now, recalling that $\Omega^{mc}$ is a minimizing hull, it is in particular mean-convex, so if $v$ is the truncated signed distance function to $\Omega^{mc}$, it is a stationary subsolution of \eqref{motion}. Let us prove that it is also a supersolution. We know that $\partial \Omega^{mc}$ is a minimal surface out of the obstacle, so $v$ satisfies
$$-|\nabla v| \div \left( \frac{\nabla v}{|\nabla v|}\right) = 0$$ in the classical sense whenever $v < u^+.$ That is exactly saying that $v$ is a supersolution of \eqref{motion}.

Then, the comparison principle (Proposition \ref{compple}) implies, since $v \ls u_0$, that $v \ls u$ and then $\{u \ls 0\} \supset \Omega^{mc}$.

Finally, $$\{u \ls 0\} = \Omega^{mc}$$ and both approaches coincide.

\subsection*{Acknowledgment}
I am grateful to Antonin Chambolle for introducing me to this problem, and for fruitful discussions. I would also like to thank Matteo Novaga for the suggestions he made, especially concerning the good framework to tackle this problem, and his interest in this work.\\
This work was partially supported by the ANR (Agence Nationale de la Recherche) through HJnet project ANR-12-BS01-0008-01.


\begin{thebibliography}{10}

\bibitem{chambolle12}
L.~Almeida, A.~Chambolle, and M.~Novaga.
\newblock Mean curvature flow with obstacles.
\newblock {\em Ann. Inst. H. Poincar\'e Anal. Non Lin\'eaire}, 29(5):667--681,
  2012.

\bibitem{almeida09}
Luis Almeida, Patrizia Bagnerini, Abderrahmane Habbal, St{\'e}phane Noselli,
  and Fanny Serman.
\newblock {Tissue repair modeling}.
\newblock 9:27--46, May 2008.

\bibitem{almeida11}
Lu{\'i}s Almeida, Patrizia Bagnerini, Abderrahmane Habbal, St{\'e}phane
  Noselli, and Fanny Serman.
\newblock {A mathematical model for dorsal closure}.
\newblock {\em {Journal of Theoretical Biology}}, 268(1):105, November 2010.

\bibitem{ATW}
Fred Almgren, Jean~E. Taylor, and Lihe Wang.
\newblock Curvature-driven flows: a variational approach.
\newblock {\em SIAM J. Control Optim.}, 31(2):387--438, 1993.

\bibitem{barles93}
Guy Barles, H~Mete Soner, and Panagiotis~E Souganidis.
\newblock Front propagation and phase field theory.
\newblock {\em SIAM Journal on Control and Optimization}, 31(2):439--469, 1993.

\bibitem{bellettini98}
Giovanni Bellettini, Matteo Novaga, and Maurizio Paolini.
\newblock An example of three dimensional fattening for linked space curves
  evolving by curvature.
\newblock {\em Communications in partial differential equations},
  23(9-10):1475--1492, 1998.

\bibitem{biton08}
Samuel Biton, Pierre Cardaliaguet, Olivier Ley, et~al.
\newblock Nonfattening condition for the generalized evolution by mean
  curvature and applications.
\newblock {\em Interfaces and Free Boundaries}, 10(1):1--14, 2008.

\bibitem{brakke78}
Kenneth~A. Brakke.
\newblock {\em The motion of a surface by its mean curvature}, volume~20 of
  {\em Mathematical Notes}.
\newblock Princeton University Press, Princeton, N.J., 1978.

\bibitem{chambolle122}
Antonin Chambolle, Massimiliano Morini, and Marcello Ponsiglione.
\newblock A nonlocal mean curvature flow and its semi-implicit time-discrete
  approximation.
\newblock {\em SIAM Journal on Mathematical Analysis}, 44(6):4048--4077, 2012.

\bibitem{chen91}
Yun~Gang Chen, Yoshikazu Giga, and Shun'ichi Goto.
\newblock Uniqueness and existence of viscosity solutions of generalized mean
  curvature flow equations.
\newblock {\em J. Differential Geom.}, 33(3):749--786, 1991.

\bibitem{c92user}
Michael~G. Crandall, Hitoshi Ishii, and Pierre-Louis Lions.
\newblock User's guide to viscosity solutions of second order partial
  differential equations.
\newblock {\em Bull. Amer. Math. Soc. (N.S.)}, 27(1):1--67, 1992.

\bibitem{ecker91}
Klaus Ecker and Gerhard Huisken.
\newblock Interior estimates for hypersurfaces moving by mean curvature.
\newblock {\em Invent. Math.}, 105(3):547--569, 1991.

\bibitem{eto12}
Tokuhiro Eto, Yoshikazu Giga, and Katsuyuki Ishii.
\newblock An area minimizing scheme for anisotropic mean curvature flow.
\newblock {\em Advances in Differential Equations}, 17(11/12):1031--1084, 2012.

\bibitem{evans91}
L.~C. Evans and J.~Spruck.
\newblock Motion of level sets by mean curvature. {I}.
\newblock {\em J. Differential Geom.}, 33(3):635--681, 1991.

\bibitem{for08}
Nicolas Forcadel.
\newblock Dislocation dynamics with a mean curvature term: short time existence
  and uniqueness.
\newblock {\em Differential Integral Equations}, 21(3-4):285--304, 2008.

\bibitem{giga90}
Y.~Giga, S.~Goto, H.~Ishii, and M.-H. Sato.
\newblock Comparison principle and convexity preserving properties for singular
  degenerate parabolic equations on unbounded domains.
\newblock {\em Indiana Univ. Math. J.}, 40(2):443--470, 1991.

\bibitem{giusti84}
Enrico Giusti.
\newblock {\em Minimal surfaces and functions of bounded variation}, volume~80
  of {\em Monographs in Mathematics}.
\newblock Birkh\"auser Verlag, Basel, 1984.

\bibitem{ilm98}
Tom Ilmanen, Peter Sternberg, and William~P. Ziemer.
\newblock Equilibrium solutions to generalized motion by mean curvature.
\newblock {\em J. Geom. Anal.}, 8(5):845--858, 1998.
\newblock Dedicated to the memory of Fred Almgren.

\bibitem{Mer16}
Gwenael Mercier.
\newblock Continuity results for tv-minimizers.
\newblock {\em arXiv preprint arXiv:1605.09655}, 2016.

\bibitem{mernov}
Gwena{\"e}l Mercier and Matteo Novaga.
\newblock Mean curvature flow with obstacles: existence, uniqueness and
  regularity of solutions.
\newblock {\em Interfaces Free Bound.}, 17(3):399--426, 2015.

\bibitem{spadaro11}
E~Spadaro.
\newblock Mean-convex sets and minimal barriers.
\newblock Preprint, 2011.

\bibitem{thouroude12}
Gilles Thouroude.
\newblock {\em Homog{\'e}n{\'e}isation et analyse num{\'e}rique d'{\'e}quations
  elliptiques et paraboliques d{\'e}g{\'e}n{\'e}r{\'e}es.}
\newblock PhD thesis, \'Ecole polytechnique X, 2012.

\bibitem{yam87}
Naoki Yamada.
\newblock Viscosity solutions for a system of elliptic inequalities with
  bilateral obstacles.
\newblock {\em Funkcial. Ekvac}, 30(2-3):417--425, 1987.

\end{thebibliography}
\end{document}